\newcommand{\R}{{\mathbb R}}
\theoremstyle{plain}
\newtheorem{theorem}{Theorem}[section]
\newtheorem{proposition}{Proposition}[section]
\theoremstyle{definition}
\newtheorem{definition}{Definition}[section]
\newtheorem{example}{Example}[section]
\newtheorem{remark}{Remark}[section]
\newtheorem*{notation*}{Notation}
\numberwithin{equation}{section}
\begin{document}
\title{Discontinuous Fractal Functions and Fractal Histopolation}
\author[M. F. Barnsley]{Michael F. Barnsley}
\address{Australian National University}
\author[P. Viswanathan]{P. Viswanathan}
\address{Australian National University}

\begin{abstract}
Fractal functions that produce smooth and non-smooth approximants constitute  an advancement to classical nonrecursive methods of approximation.
In both classical and fractal approximation methods emphasis is given for investigation of continuous approximants whereas much real data demand discontinuous models. This article intends to point out that many of the results  on fractal functions in the traditional setting can be immediately  extended to the discontinuous case. Another topic is the study of  area matching properties of integrable fractal functions in order to introduce the concept of fractal histopolation.

\end{abstract}
\maketitle

\section{Introductory Remarks} \label{ffhistosec1}
Approximation theory, which primarily focuses on the approximation of real-valued continuous
functions by some simpler class of functions, covers a great deal of mathematical territory.
Beyond polynomials, trigonometric functions and splines, multitudes of approximation tools were developed which were often directed to a specific  application domain. In the historical development of ``classical" approximation theory, all efforts were directed towards study of  smooth approximants. On the other hand,  many experimental and real world signals rarely show sensation of smoothness in their traces and hence demand  rough functions for an effective representation.

\par Following Benoit Mandelbrot's fractal vision  of the universe,
the notion of fractal function was introduced  in reference \cite{Barnsley1}. Subsequently, it was observed that fractal functions can be used for smooth approximation as well
\cite{BH}, thereby supplementing  various traditional approximation techniques. For more than a quarter century of its first pronouncement, theory of
fractal functions has been extensively researched  and has evolved beyond its mathematical framework; see, for instance, \cite{BEM, BD,CV,GH,Mas,N1,VC2,WY}. Specific applications of fractal interpolation function include medicine, physics, and economics with, for instance, in the study  of tumor perfusion \cite{CZ}, electroencephalograms \cite{SN}, turbulence \cite{BFP}, speech signals \cite{VDL}, signal processing \cite{PH}, stock market \cite{YX}  etc.
\par
The fractal functions were originally introduced as continuous functions interpolating a prescribed set of data. Thus, the approximation methods, both  fractal functions and their precedents, are based on continuity, and methods for discontinuous approximation
are limited. However, many real data requires to be modeled by discontinuous functions. For instance, discontinuous functions arise as solutions of partial differential equations describing different types of  systems from classical physics. Since machines have only finite precision, any functions as represented by machine will be discontinuous and so is anything which involves discretization. Mandelbrot's view of the price movement in competitive markets is  one of the profound sources of discontinuity. In words of Mandelbrot \cite{Mandel}: ``...But prices on competitive markets need not be continuous,  and they are conspicuously discontinuous. The only reason for assuming continuity is that many sciences tend, knowingly or not, to copy the procedures that prove successful in Newtonian physics...." Hence it appears that Mandelbrot envisaged not only a world of irregularity, but ultimate discontinuity.
\par
 Deriving principle influence from these facts,  the current  article intends to investigate discontinuous fractal functions and their elementary properties. These fractal functions may not interpolate the data set, thus breaking the inherent  continuity and interpolatory nature of the fractal functions  in the traditional setting. Discontinuous fractal functions reported recently  in \cite{NAVdisFIF} provides one of the impetus for the current study. However, our main aim is to show that the tools developed in 1986 paper \cite{Barnsley1} are well-suited to deal with discontinuous fractal functions as well. Further,  we convey  that the algebraic structure of equations governing fractal functions and their moment integrals are similar to that of usual continuous fractal interpolation function, but provides extra degrees of freedom. To this end, first we prove that the set of points of discontinuity of these fractal functions has Lebesgue measure zero; a fact that ensures Riemann integrability. As a consequence,  functional equations for various integral transforms of these discontinuous fractal functions can be easily derived, thus finding potential applications in various fields in science and engineering. For a  restricted  class of discontinuous fractal functions and its connection with Weierstrass type functional equations, the reader is invited to refer \cite{BBVV}.

\par
Interpolation ``represents" a function by preserving function values at prescribed knot points. A closely related but different approximation method is Histopolation which  preserves integrals
of the function over the intervals of histopolation. Given a mesh $\Delta:= \{x_0,x_1, \dots, x_N\}$ with strictly increasing knots and a histogram $F=\{f_1,f_2,\dots, f_N\}$, that is $f_i$ is the frequency for the interval $[x_{i-1},x_i]$ with mesh spacing $h_i$, $i=1,2,\dots, N$, histopolation seeks to find a function $f$ that satisfies the ``area" matching condition:
$$\int_{x_{i-1}}^{x_i} f(x)~\mathrm{d}x =f_i h_i.$$
\par There are  several models that lead to histopolation problem. For instance, $F$ may be obtained from a finite sample with the observed frequency $f_i$ in the class interval $[x_{i-1}, x_i)$ for $i=1,2,\dots, N$ and area matching function $f$ may be taken as approximation of the unknown density function of the underlying random variable. The following fundamental  problem occurring in one dimensional
motion of a material point is another example where the problem of histopolation emerges quite naturally. Suppose we have to model velocity $f(x)$ of the material point wherein  $g(x)$ is the position of the point at time $x$, which is known at specified points $x_0<x_1<\dots<x_N$. Then we have
$$ g(x) = g(x_0) + \int_{x_0}^x f(t) ~\mathrm{d}t, \quad x_0 \le t\le x.$$
Since
$$ \int_{x_{i-1}}^{x_i} f(x) ~\mathrm{d}x= g(x_i) -g(x_{i-1}), \quad i=1,2,\dots,N $$
a representative for $f$ can be obtained by solving the histopolation problem with values in the histogram $F$ given by
$$f_i= \frac{g(x_i) -g(x_{i-1})}{h_i}.$$
In contrast to the vast literature available on interpolation, the researches on histopolation is limited. In case of smooth histopolant $f$, the problem of histopolation can be easily transformed in to a problem of interpolation as follows. If we construct an interpolant $g \in \mathcal{C}^1[x_0,x_N]$ with interpolation conditions $g(x_i) = \sum_{j=1}^i h_jf_j + g(x_0)$, $i=1,2,\dots, N$, where $g(x_0)$ is arbitrary, then $f=g'$ solves the histopolation problem. This may be one of the reasons for the obscurity of histopolation as a separate problem. However, the situation is different in the case of  fractal functions as they are not differentiable in general. At the same time, as in the case of interpolation, constructing rough histopolants is of practical relevance. Owing to these reasons, fractal histopolation deserves special attention and it is to this that the last section of the paper focus on. \\
Overall, the current study may be viewed as an attempt to revitalize fractal functions and their applications and to initiate a study on fractal histopolation.
\section{Continuous  Fractal Interpolation Function: Revisited}\label{ffhistosec2}
To make the article fairly self-contained, we shall briefly evoke the notion of  Fractal Interpolation Function  and associated  concepts in this section. To prepare the setting, first we need the following definition.
\begin{definition}
Let $(X,d)$ be a complete metric space. Let $m >1$ be a positive integer and let $w_i: X \to X$ for $i=1,2,\dots, m$ be continuous mappings. Then the
collection $\{X; w_1,w_2,\dots, w_m\}$ is called
an Iterated Function System, IFS for short.
\end{definition}
\noindent For a given  IFS $\mathcal{F}=\{X; w_1,w_2,\dots, w_m\}$ one can associate a set-valued map, which is termed collage map, as follows.
Let $\mathcal{H}(X)$ denote the collection of all non-empty compact subsets of $X$ endowed with the Hausdorff metric
\[
h(A,B)= \max \Big\{\max_{a \in A} \min_{b \in B} d(a,b), \max_{b \in B} \min_{a \in A} d(b,a)\Big\} ~ \forall A, B \in \mathcal{H}(X).
\]
Define
$W: \mathcal{H}(X) \to \mathcal{H}(X)$ by
\[ W(B) = \cup_{i=1}^m w_i (B),\]
where $w_i(B)=\{w_i(b): b \in B\}$.
\begin{definition}
A nonempty compact subset $A$ of $X$ is called an attractor of an IFS $\mathcal{F}=\{X; w_1,w_2,\dots, w_m\}$ if
\begin{enumerate}
\item $A$ is a fixed point of $W$, that is $W(A)=A$

\item there exists an open set $U \subseteq X$ such that $A \subset U$ and $\lim_{k \to \infty} h\big( W^k (B),A)=0$ for all $B \in \mathcal{H}(U)$, where
$W^k$ is the $k$-fold autocomposition of $W$.
\end{enumerate}
The largest open set $U$  for which (2) holds is called the basin of attraction for
the attractor $A$ of the IFS $\mathcal{F}$. The attractor $A$ is also referred to as a fractal or self-referential set owing to the fact that $A$ is a union of transformed copies of itself.
\end{definition}
\noindent If the IFS $\mathcal{F}$ is contractive (hyperbolic), that is each map $w_i$ is a contraction map, then the existence of a unique  attractor is ensured by the Banach fixed point theorem and in this case the basin of attraction is $X$.\\
In what follows, the question of how to obtain continuous functions whose graphs are fractals in the above
sense is readdressed.\\
Let $\{(x_i, y_i) : i = 0, 1,\dots,N\}$ denote the cartesian coordinates of a finite set of
points with increasing abscissae in the Euclidean plane $\mathbb{R}^2$. Let $I$ denote the closed bounded interval $[x_0,x_N]$ and $I_i=[x_{i-1},x_i]$
for $i=1,2,\dots, N$. Suppose $L_i : I \to I_i$ be contraction homeomorphism such that
\begin{equation}\label{DFIF1}\left\{
\begin{split}
&L_i(x_0)=x_{i-1}, \quad L_i(x_N)=x_i, \\
&|L_i(x) -L_i(x^*)| \le l_i |x-x^*| \quad \forall~ x, x^* \in I, ~ l_i \in (0,1).
\end{split}\right.
\end{equation}
Note that $\{I; L_i, i=1,2,\dots, N\}$ is a hyperbolic IFS with unique attractor $I$. Further, assume that $F_i: I \times \mathbb{R} \to \mathbb{R}$ is continuous map satisfying
\begin{equation}\label{DFIF2}\left\{
\begin{split}
& F_i(x_0,y_0)=y_{i-1}, \quad F_i(x_N,y_N)= y_i,\\
& \big|F_i(x,y)-F_i(x,y^*)\big| \le s_i |y-y^*| \quad  \forall~ y,y^* \in \mathbb{R}, ~ s_i \in (0,1).
\end{split}\right.
\end{equation}
Now define functions $W_i : I \times \mathbb{R} \to I \times \mathbb{R}$ by $$W_i(x,y) = \big(L_i(x), F_i(x,y) \big).$$ The following is a fundamental theorem  that leads to the definition of Fractal Interpolation Function.
\begin{theorem}(\cite{Barnsley1})
The IFS $\{I \times \mathbb{R}; W_i, i=1,2,\dots, N\}$ has a unique attractor $G(f)$ which is  the graph of a continuous function $f: I \to \mathbb{R}$ satisfying $f(x_i)=y_i$ for $i=0,1,\dots, N$.
\end{theorem}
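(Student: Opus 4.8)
The plan is to first realise the attractor as the graph of the fixed point of a Read--Bajraktarevi\'c-type operator, and then to verify directly that this graph is the unique attractor of the IFS $\{I\times\mathbb{R};W_i\}$. To this end, work in the complete metric space $\mathcal{C}^{*}:=\{g\in\mathcal{C}(I):g(x_0)=y_0,\ g(x_N)=y_N\}$ with the uniform metric, and define $T:\mathcal{C}^{*}\to\mathcal{C}^{*}$ by $(Tg)(x)=F_i\big(L_i^{-1}(x),g(L_i^{-1}(x))\big)$ for $x\in I_i$. One checks that $T$ is well defined: each $L_i^{-1}:I_i\to I$ is continuous, and at a shared knot $x_i$ the two candidate values of $Tg$ agree (both equal $y_i$, by \eqref{DFIF1} and \eqref{DFIF2}), so the pasting lemma gives $Tg\in\mathcal{C}(I)$; evaluating at $x_0$ and $x_N$ shows $Tg\in\mathcal{C}^{*}$. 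The contraction estimate is immediate from \eqref{DFIF2}: for $x\in I_i$, $|(Tg)(x)-(Th)(x)|\le s_i\,|g(L_i^{-1}(x))-h(L_i^{-1}(x))|\le s\,\|g-h\|_{\infty}$ with $s:=\max_i s_i<1$, so by the Banach fixed point theorem $T$ has a unique fixed point $f\in\mathcal{C}^{*}$. That $f$ interpolates the data is read off from $f=Tf$: the endpoint conditions hold since $f\in\mathcal{C}^{*}$, and for $1\le i\le N-1$, evaluating $f=Tf$ on $I_i$ at $x_i$ gives $f(x_i)=F_i(x_N,f(x_N))=F_i(x_N,y_N)=y_i$.

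Next I would show that $G(f)$ is a fixed point of the collage map $W$. Since the first coordinate of $W_i(x,y)$ is $L_i(x)$ and is independent of $y$, the substitution $u=L_i(x)$ together with $f=Tf$ gives $W_i(G(f))=\{(u,f(u)):u\in I_i\}$, the graph of $f$ over $I_i$; taking the union over $i$ yields $W(G(f))=G(f)$. Moreover $G(f)$ is compact, being the continuous image of $I$.

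The substantive step, and the one I expect to be the main obstacle, is to show that $G(f)$ is the attractor, i.e.\ that $h\big(W^{k}(B),G(f)\big)\to 0$ for every nonempty compact $B\subseteq I\times\mathbb{R}$. The difficulty is that the maps $W_i$ need not be contractions on $I\times\mathbb{R}$, since $F_i$ is only assumed continuous in its first variable, so the standard hyperbolic IFS machinery does not apply verbatim. First I would locate an invariant rectangle: with $M:=\max_i\max_{x\in I}|F_i(x,0)|$ the estimate $|F_i(x,y)|\le M+s|y|$ shows $W_i(K)\subseteq K$ for $K:=I\times[-R,R]$ whenever $R\ge M/(1-s)$, and the same estimate forces $\|f\|_{\infty}\le M/(1-s)$, so after enlarging $R$ if necessary we may assume $G(f)\subseteq K$ and $B\subseteq K$, whence $W^{k}(B)\subseteq K$ for all $k$. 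The crux is that the vertical contractivity drives the iterates toward the graph: writing a point of $W^{k}(B)$ as $W_{\sigma}(b)$ for a word $\sigma$ of length $k$ and $b=(b_x,b_y)\in K$, the point $W_{\sigma}(b_x,f(b_x))$ lies on $G(f)$ and has the same first coordinate $u$ as $W_{\sigma}(b)$ (again because first coordinates ignore the second variable), hence equals $(u,f(u))$; an induction on $k$ using only the $y$-contractivity in \eqref{DFIF2} then gives $|v-f(u)|\le s^{k}|b_y-f(b_x)|\le 2R\,s^{k}$, where $v$ is the second coordinate of $W_{\sigma}(b)$. This bounds $\sup_{z\in W^{k}(B)}\mathrm{dist}(z,G(f))$ by $2Rs^{k}$. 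For the reverse direction, note that the first-coordinate projection intertwines $W$ with the collage map of the hyperbolic IFS $\{I;L_i\}$, whose attractor is $I$ with basin $I$; hence $\pi_1(W^{k}(B))\to I$ in $\mathcal{H}(I)$, and combining this with the previous estimate and the uniform continuity of $f$ on $I$ shows every point of $G(f)$ is approximated by points of $W^{k}(B)$. Together these give $h\big(W^{k}(B),G(f)\big)\to 0$; in particular $G(f)$ is an attractor with basin $I\times\mathbb{R}$, and uniqueness follows since any other attractor $A'$ satisfies $A'=W^{k}(A')\to G(f)$.
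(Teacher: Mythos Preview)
The paper does not give its own proof of this theorem: it is recalled from \cite{Barnsley1} as background, and the Read--Bajraktarevi\'c operator part of your argument is exactly the content of the subsequently stated (and also only cited) Theorem~2.2. Your proof is correct.

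For the identification of the attractor with $G(f)$, it is worth comparing your route with the one the paper itself takes when it proves the analogous result in the discontinuous setting (Theorem~4.1). There the authors impose a Lipschitz hypothesis on $q_i$ and introduce the weighted metric $d_\theta\big((x,y),(x',y')\big)=|x-x'|+\theta|y-y'|$; for small enough $\theta$ each $W_i$ becomes a genuine contraction, and the standard hyperbolic IFS theorem then produces a unique attractor, which is identified with $\overline{G(f)}$ by an invariance argument. Your approach is different and, for the theorem as stated, stronger: you exploit the skew-product structure (the first coordinate of $W_i$ ignores $y$) and use only the vertical contractivity from \eqref{DFIF2} to obtain the estimate $\mathrm{dist}(W_\sigma(b),G(f))\le s^{|\sigma|}\cdot 2R$, and you combine this with Hausdorff convergence of $\pi_1(W^k(B))$ to $I$ under the hyperbolic IFS $\{I;L_i\}$ and uniform continuity of $f$ to get the other half of the Hausdorff distance. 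This avoids any Lipschitz assumption on $F_i$ in its first variable, which is exactly what the hypotheses \eqref{DFIF1}--\eqref{DFIF2} allow; the $d_\theta$-metric method would not apply here without an extra hypothesis. The trade-off is that the $d_\theta$ route is shorter when the Lipschitz condition is available, while your argument is the one that fits the general continuous $F_i$ actually assumed in the statement.
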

\begin{definition}\label{DFIF3}
The function $f$ that made its debut in the foregoing theorem is termed Fractal Interpolation Function (FIF) corresponding to the data $\{(x_i,y_i): i=0,1,\dots, N\}$.
\end{definition}
\noindent To obtain a functional equation for $f$, one may proceed as follows.\\
Denote by $\mathcal{C}(I)$ the space of all continuous functions defined on $I$ endowed with the  Chebyshev norm
 $$\|g\|_\infty:= \max \{|g(x)|: x \in I\}$$
and consider the closed (metric) subspace
$$\mathcal{C}_{y_0,y_N}(I):= \{g \in
\mathcal{C}(I): g(x_0)=y_0, g(x_N)=y_N\}.$$
Define an operator, which is a form of Read-Bajraktarivi\'{c} operator, $T: \mathcal{C}_{y_0,y_N}(I) \rightarrow \mathcal{C}_{y_0,y_N}(I)$
$$(Tg)(x)=F_i\big(L_i^{-1}(x), g \circ L_i^{-1}(x)\big), ~ x \in
I_i, ~i \in \{1,2,\dots, N\}.$$
\begin{theorem} (\cite{Barnsley1})
The operator $T: \mathcal{C}_{y_0,y_N}(I) \rightarrow \mathcal{C}_{y_0,y_N}(I)$ is a contraction with a contractivity factor $s:=\max\{s_i: i=1,2,\dots, N\}$ and the fixed point of $T$ is the FIF $f$ corresponding to the data $\{(x_i,y_i):i=0,1,\dots,N\}$. Consequently, $f$ satisfies the functional equation
$$f\big(L_i(x) \big) = F_i\big(x,f(x) \big), ~x \in I, ~i=1,2,\dots, N.$$
\end{theorem}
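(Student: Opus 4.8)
The plan is to establish, one step at a time, that $T$ is well defined on $\mathcal{C}_{y_0,y_N}(I)$, that it is a contraction with factor $s=\max_i s_i$, and that its (unique) fixed point is exactly the FIF of Definition~\ref{DFIF3}; the functional equation then drops out by rewriting the fixed point identity $Tf=f$.

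First I would verify that $Tg\in\mathcal{C}_{y_0,y_N}(I)$ whenever $g\in\mathcal{C}_{y_0,y_N}(I)$. On each $I_i$ the assignment $x\mapsto F_i\big(L_i^{-1}(x),g\circ L_i^{-1}(x)\big)$ is continuous, being a composition of the continuous maps $L_i^{-1}\colon I_i\to I$, $g$, and $F_i$. At an interior knot $x_i$ ($1\le i\le N-1$) the two prescriptions must be reconciled: since $L_i^{-1}(x_i)=x_N$ and $g(x_N)=y_N$, the value computed from $I_i$ is $F_i(x_N,y_N)=y_i$ by \eqref{DFIF2}; since $L_{i+1}^{-1}(x_i)=x_0$ and $g(x_0)=y_0$, the value computed from $I_{i+1}$ is $F_{i+1}(x_0,y_0)=y_i$, again by \eqref{DFIF2}. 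Hence the two agree and $Tg$ is continuous on all of $I$; the same computation at the endpoints gives $(Tg)(x_0)=F_1(x_0,y_0)=y_0$ and $(Tg)(x_N)=F_N(x_N,y_N)=y_N$, so indeed $Tg\in\mathcal{C}_{y_0,y_N}(I)$.

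Next, for $g,g^*\in\mathcal{C}_{y_0,y_N}(I)$ and $x\in I_i$, the Lipschitz hypothesis in \eqref{DFIF2} gives
$$\big|(Tg)(x)-(Tg^*)(x)\big|=\big|F_i\big(L_i^{-1}(x),g\circ L_i^{-1}(x)\big)-F_i\big(L_i^{-1}(x),g^*\circ L_i^{-1}(x)\big)\big|\le s_i\big|g\circ L_i^{-1}(x)-g^*\circ L_i^{-1}(x)\big|\le s\|g-g^*\|_\infty,$$
and taking the maximum over $x\in I$ yields $\|Tg-Tg^*\|_\infty\le s\|g-g^*\|_\infty$ with $s\in(0,1)$. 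Since $\mathcal{C}_{y_0,y_N}(I)$ is a closed subset of the complete space $\big(\mathcal{C}(I),\|\cdot\|_\infty\big)$ --- uniform limits preserve continuity as well as the endpoint constraints --- it is itself complete, so the Banach fixed point theorem furnishes a unique fixed point $f\in\mathcal{C}_{y_0,y_N}(I)$.

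Finally I would identify $f$ with the FIF. Writing out $Tf=f$ and substituting $x=L_i(t)$ (legitimate since each $L_i$ is a homeomorphism of $I$ onto $I_i$) gives $f\big(L_i(t)\big)=F_i\big(t,f(t)\big)$ for all $t\in I$ and $i=1,\dots,N$, which is the asserted functional equation. Using it, $W_i\big(G(f)\big)=\{\big(L_i(t),F_i(t,f(t))\big):t\in I\}=\{\big(L_i(t),f(L_i(t))\big):t\in I\}=\{(x,f(x)):x\in I_i\}$, whence $W\big(G(f)\big)=\bigcup_{i=1}^{N}W_i\big(G(f)\big)=\{(x,f(x)):x\in\bigcup_i I_i\}=G(f)$. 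Thus the compact set $G(f)$ is a fixed point of the collage map $W$ of the IFS $\{I\times\mathbb{R};W_i,i=1,\dots,N\}$; this IFS is hyperbolic with respect to the metric $d_\theta\big((x,y),(x^*,y^*)\big)=|x-x^*|+\theta|y-y^*|$ on $I\times\mathbb{R}$ for $\theta>0$ sufficiently small, so its attractor is the unique fixed point of $W$ and must coincide with $G(f)$; hence $f$ is the FIF corresponding to $\{(x_i,y_i):i=0,1,\dots,N\}$. I expect this last step --- matching the fixed point of the Read-Bajraktarivi\'{c} operator $T$ on the function space against the fixed point of the collage map $W$ on $\mathcal{H}(I\times\mathbb{R})$, and arguing uniqueness via hyperbolicity --- to be the only point demanding genuine care; the well-definedness check and the contraction estimate are routine.
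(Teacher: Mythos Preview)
The paper does not actually prove this theorem; it is quoted as a background result from \cite{Barnsley1} with no accompanying argument. Your proof is correct and follows precisely the template the paper itself uses when it proves the analogous statements in Theorems~\ref{thm1}--\ref{thm3}: verify that $T$ preserves the relevant function space by checking the join-up conditions at the interior knots and the endpoint constraints, establish the contraction estimate from the Lipschitz hypothesis on $F_i$ in the second variable, and appeal to the Banach fixed point theorem.

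One small point deserves attention in your final paragraph. You assert that the IFS $\{I\times\mathbb{R};W_i\}$ is hyperbolic with respect to $d_\theta$ for $\theta$ small, but under the hypotheses \eqref{DFIF1}--\eqref{DFIF2} the maps $F_i$ are only assumed continuous in $x$ and Lipschitz in $y$; a Lipschitz bound in $x$ is what makes the $d_\theta$ argument go through (compare the paper's own Theorem in Section~\ref{ffhistosec4}, where Lipschitz continuity of $q_i$ is explicitly imposed for exactly this reason). The fix is immediate: you do not need to re-establish hyperbolicity at all, since the preceding cited theorem already asserts that the IFS possesses a \emph{unique} attractor which is the graph of a continuous function. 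Having shown that $G(f)$ is compact and satisfies $W\big(G(f)\big)=G(f)$, uniqueness forces $G(f)$ to be that attractor, and hence $f$ is the FIF of Definition~\ref{DFIF3}.
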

\noindent The most widely studied FIFs in theory and applications are defined by IFS with maps
\begin{equation}\label{DFIF4}\left\{
\begin{split}
L_i (x) =& ~ a_i x + b_i\\
F_i(x,y)=&~\alpha_i y + q_i(x),
\end{split}\right.
\end{equation}
where $|\alpha_i|<1$ and $q_i: I \to \mathbb{R}$ is continuous function satisfying
$$q_i(x_0)= y_{i-1} -\alpha_i y_0, \quad q_i(x_N)=y_i -\alpha_i y_N.$$
The subinterval end point restraints yield
$$a_i= \frac{x_i-x_{i-1}}{x_N-x_0}, \quad b_i= \frac{x_Nx_{i-1}-x_0x_i}{x_N-x_0}.$$
The parameter $\alpha_i$ is called vertical scaling factor of the map $W_i$ and the vector $\alpha=(\alpha_1,\alpha_2,\dots, \alpha_N) \in (-1,1)^N$ is refereed to as scale vector of the IFS. If $q_i$, $i=1,2,\dots, N$ are affine maps, then the FIF is termed affine FIF. In this case, $q_i (x) = q_{i0} x + q_{i1}$, where
$$ q_{i0}=\frac{y_i-y_{i-1}}{x_N-x_0} -\alpha_i \frac{y_N-y_0}{x_N-x_0}, \quad q_{i1}=\frac{x_N y_{i-1}-x_0y_i}{x_N-x_0}-\alpha_i \frac{x_Ny_0-x_0y_N}{x_N-x_0}. $$
The following special choice of $q_i$ in (\ref{DFIF4}) is of special interest.
\begin{equation}\label{DFIF5}
q_i(x)=h\circ L_i(x) - \alpha_i b(x),
\end{equation}
where the height function $h$ is a continuous  interpolant to the data and  base function $b$  is a continuous function that passes though the extreme points $(x_0,y_0)$ and $(x_N,y_N)$. Note that in case of affine FIF, $h$ is piecewise linear function with vertices at the data points $\{(x_i,y_i): i=0,1,2,\dots, N\}$ and $b$ is a line joining the extremities of the interpolation interval. In contrast to traditional nonrecursive interpolants, the FIF $f$ corresponding to (\ref{DFIF4}) is, in general,  nondifferentiable. For instance, we have
\begin{theorem}(\cite{LS})
Let $\{(x_0,y_0), (x_1,y_1), \dots, (x_N,y_N)\}$ be an  equally spaced  data set in $I=[x_0,x_N]=[0,1]$. Consider the IFS defined through the maps (\ref{DFIF4})-(\ref{DFIF5}), where $h \in \mathcal{C}^1[0,1]$, $|\alpha_i| \ge \frac{1}{N}$ for $i=1,2,\dots, N$, and $h'(x)$ does not agree with $y_N-y_0$ in a nonempty open subinterval of $I$. Then the set of points at which the corresponding FIF defined by $$f \big(L_i(x)\big) = h \big(L_i(x)\big) + \alpha_i (f-b)(x)$$ is not differentiable is dense in $I$.
\end{theorem}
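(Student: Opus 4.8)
The plan is to argue by contradiction. Suppose the set of non-differentiability points of the FIF $f$ is not dense in $I=[0,1]$, so that $f$ is differentiable at every point of some open interval $J\subseteq I$. Write $c:=y_N-y_0$ for the slope of the base line $b$ (so $b'\equiv c$), set $\phi:=f-b$, and recast the functional equation $f\circ L_i=h\circ L_i+\alpha_i(f-b)$ as $\phi\circ L_i=p_i+\alpha_i\,\phi$ with $p_i:=(h-b)\circ L_i\in\mathcal{C}^1(I)$ and $L_i(x)=(x+i-1)/N$. The first step is to \emph{propagate} differentiability to the whole interval: for a word $\sigma=(i_1,\dots,i_k)$ the map $L_\sigma:=L_{i_1}\circ\cdots\circ L_{i_k}$ is an affine homeomorphism of $I$ onto a subinterval of length $N^{-k}\operatorname{diam}(I)$, so for $k$ large and the word chosen suitably one has $L_\sigma(I)\subseteq J$; iterating the functional equation gives $\phi\circ L_\sigma=P_\sigma+\bigl(\prod_{j=1}^{k}\alpha_{i_j}\bigr)\phi$ with $P_\sigma\in\mathcal{C}^1(I)$, and since $\phi$ is differentiable on $L_\sigma(I)\subseteq J$ while $\prod_{j}\alpha_{i_j}\neq0$ (each $|\alpha_i|\ge 1/N$), this forces $\phi$, hence $f$, to be differentiable at \emph{every} point of $(x_0,x_N)$.

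Next I would differentiate. Using $N\,p_i'(x)=(h-b)'(L_i(x))=h'(L_i(x))-c$ and writing $\psi:=\phi'=f'-c$ and $e_i:=(h'\circ L_i)-c$, one obtains
\begin{equation}\label{eq:dfif-derrel}
\psi\bigl(L_i(x)\bigr)=e_i(x)+(N\alpha_i)\,\psi(x),\qquad x\in(x_0,x_N),\ i=1,\dots,N,
\end{equation}
where now $|N\alpha_i|\ge 1$. Two observations will be used throughout. (i) $\psi\not\equiv0$: otherwise $\phi$ is constant, hence $\phi\equiv0$ (as $\phi(x_0)=f(x_0)-b(x_0)=0$), i.e.\ $f=b$, which forces $h\circ L_i=b\circ L_i$ for every $i$ and so $h\equiv b$ on $I$, contradicting the hypothesis on $h'$. (ii) Each $e_i$ is continuous and, since $L_i$ is an open map and $h'-c$ vanishes on no subinterval, $e_i$ too vanishes on no subinterval.

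The crucial structural input is that $\psi$, being a pointwise derivative of the continuous function $\phi$ on an interval, is of Baire class one; consequently its set of continuity points is residual and, more to the point, the set $U:=\{\,t\in(x_0,x_N):\psi\text{ is bounded on no neighbourhood of }t\,\}$ is closed and nowhere dense. On the other hand, \eqref{eq:dfif-derrel} together with boundedness of each $e_i$, $N\alpha_i\neq0$, and the fact that $L_i$ is a homeomorphism shows that membership in $U$ is preserved both by $t\mapsto L_i(t)$ and by $t\mapsto L_i^{-1}(t)$; hence $U$ is invariant under the expanding map $T$ (acting as $L_i^{-1}$ on $I_i^{\circ}$) and under all its inverse branches. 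Since the set of $T$-preimages of any point is dense in $I$, a nonempty such $U$ would be dense, contradicting nowhere-density; therefore $U=\varnothing$, i.e.\ $\psi$ is bounded on every compact subinterval of $(x_0,x_N)$.

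It remains to derive a contradiction from \eqref{eq:dfif-derrel}, and this is where the hypothesis $|\alpha_i|\ge 1/N=|L_i'|$ — that the recursion \eqref{eq:dfif-derrel} is \emph{non-contractive} on $\psi$ — is decisive. Iterating \eqref{eq:dfif-derrel} along a power $i_0^k$ of a single index and solving for $\psi(x)$ gives $\psi(x)=(N\alpha_{i_0})^{-k}\psi\bigl(L_{i_0}^k(x)\bigr)-\sum_{j=0}^{k-1}(N\alpha_{i_0})^{-1-j}e_{i_0}\bigl(L_{i_0}^j(x)\bigr)$; when $|\alpha_{i_0}|>1/N$ and the fixed point $\mu_{i_0}=(i_0-1)/(N-1)$ of $L_{i_0}$ is interior, the boundary term tends to $0$ (the previous paragraph bounds $\psi(L_{i_0}^k(x))$ as $L_{i_0}^k(x)\to\mu_{i_0}$) and the series converges uniformly, so $\psi$ is in fact \emph{continuous}, i.e.\ $f\in\mathcal{C}^1(x_0,x_N)$. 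One then has to show that this, together with the instances of \eqref{eq:dfif-derrel} for the remaining indices, over-determines the continuous $\psi$ and pins $h'-c$ down rigidly enough to force it to vanish on some subinterval, which contradicts (ii). The genuine obstacle, however, is the borderline regime $|\alpha_i|=1/N$ for \emph{every} $i$ (and incidental cases such as $N=2$, where $\mu_1,\mu_2$ are endpoints): there $|N\alpha_i|=1$, the series above no longer converges for free, and instead of a clean continuity conclusion one must run a telescoping / oscillation estimate — using (ii) to pick a subinterval on which $|h'-c|\ge\delta>0$, pushing it into a deep cylinder, and producing from \eqref{eq:dfif-derrel} an orbit along which $\psi$ cannot remain bounded near its limit point, contradicting the previous paragraph. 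Keeping track of the accumulated signs $\prod_{j}N\alpha_{i_j}\in\{\pm1\}$ and of the position of the evaluation points relative to the sign-definite block of $h'-c$ is the technical heart of the argument; the rest is routine.
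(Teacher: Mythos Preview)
First, a point of orientation: the paper does not prove this theorem. It is quoted from \cite{LS} without argument, as background motivating the later discussion of smooth and non-smooth fractal functions. So there is no in-paper proof against which to compare your attempt.

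On your proposal itself, the scaffolding is sound but the decisive step is missing. The propagation of differentiability from $J$ to all of $(x_0,x_N)$ via $\phi\circ L_\sigma=P_\sigma+\bigl(\prod_j\alpha_{i_j}\bigr)\phi$ with $\prod_j\alpha_{i_j}\neq0$ is correct; the differentiated relation $\psi(L_i(x))=e_i(x)+(N\alpha_i)\psi(x)$ follows cleanly; and the Baire-class-one/invariance argument forcing $\psi$ to be locally bounded on the open interval is a nice device. But you never actually extract the contradiction. In the strict case $|N\alpha_{i_0}|>1$ you deduce continuity of $\psi$ and then merely assert that the remaining relations ``over-determine'' $\psi$ so as to force $h'-c\equiv0$ on a subinterval --- no mechanism is given, and a priori there is no obstruction to a continuous $\psi$ satisfying all $N$ instances of the recursion with nonvanishing $e_i$ (indeed, your own series formula manufactures such a $\psi$ from any single $e_{i_0}$; the question is whether compatibility with the \emph{other} indices rules this out, and that is precisely what you do not address). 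In the borderline regime $|\alpha_i|=1/N$ for every $i$ you concede outright that ``the technical heart of the argument'' is an oscillation/telescoping estimate that you only describe in words; the sign-tracking and positioning issues you flag are real and nothing is proved. Since the hypothesis $|\alpha_i|\ge 1/N$ enters the argument \emph{only} at this last step, what you have written is a reasonable reduction with the essential part left open.
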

\noindent However, by proper choices of elements in the IFS, a fractal function $f \in \mathcal{C}^k(I)$, $k \in \mathbb{N} \cup \{0\}$ can be constructed and this is the content of the following theorem.
\begin{theorem} (\cite{BH})
Let $\{(x_i,y_i):i=0,1,2,\dots, N\}$ be a prescribed set of interpolation data with increasing abscissae.
Consider the IFS $\{I \times \mathbb{R}; W_i, i=1,2,\dots, N\}$, where $W_i(x,y)=\big(L_i(x), F_i(x,y)\big)$, $L_i(x) = a_i x + b_i$, and $F_i(x,y)=\alpha_i y
+ q_i(x)$. Suppose that for some
integer $k\geq0$, $|\alpha_i|< a_i^k$, and $q_i\in
\mathcal{C}^k(I)$ for $i=1,2,\dots,N$. Let
\begin{eqnarray*}\label{m8}
F_{i,r}(x,y)=\frac{\alpha_i y+ q_i^{(r)}(x)}{a_i^r};~ y_{1,r} =
\frac {q_1^{(r)}(x_1)}{a_1^r-\alpha_1},
y_{N,r}=\frac{q_{N}^{(r)}(x_N)}{a_{N}^r-\alpha_{N}},~~r =
1,2,\dots,k.
\end{eqnarray*}
If $ F_{i-1,r}(x_N, y_{N,k})=F_{i,r}(x_0,y_{0,k})$  for $\ i =2,3,
\dots,N$ and  $\ r =1,2,\dots,k$, then the IFS
$\{I \times \mathbb{R};W_i:i=1,2,\dots, N\}$ determines a FIF $f\in
\mathcal{C}^k(I)$, and $f^{(r)}$ is the fractal function determined by the IFS
$\{I \times \mathbb{R};\big(L_i(x),F_{i,r}(x,y)\big),i=1,2,\dots,N\}$\: for $ r =
1,2,\dots,k$.
\end{theorem}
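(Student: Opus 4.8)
The plan is to run an induction on $r\in\{0,1,\dots,k\}$, proving at stage $r$ that $f\in\mathcal{C}^{r}(I)$ and that $f^{(r)}$ equals the fractal function $g_r$ attached to the IFS $\{I\times\mathbb{R};\,(L_i(x),F_{i,r}(x,y)),\ i=1,\dots,N\}$; the case $r=0$ is just the original FIF, $g_0=f$. As a preliminary I would verify that for every $r\le k$ the maps $F_{i,r}(x,y)=a_i^{-r}\big(\alpha_i y+q_i^{(r)}(x)\big)$ are legitimate data for Barnsley's FIF existence theorem: $q_i^{(r)}$ is continuous because $q_i\in\mathcal{C}^k(I)$, and since $L_i$ carries $I$ onto the proper subinterval $I_i$ one has $a_i\in(0,1)$, so the contractivity factor of $F_{i,r}$ in the second variable is $|\alpha_i|/a_i^{r}<a_i^{k}/a_i^{r}=a_i^{k-r}\le1$. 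I would note that $y_{0,r}$ and $y_{N,r}$ are precisely the fixed points of $y\mapsto F_{1,r}(x_0,y)$ and $y\mapsto F_{N,r}(x_N,y)$ (so the displayed formulas for them are forced), and that the interior values $y_{i,r}:=F_{i,r}(x_N,y_{N,r})$ are unambiguous exactly because the hypothesis gives $F_{i-1,r}(x_N,y_{N,r})=F_{i,r}(x_0,y_{0,r})$. Barnsley's theorem then furnishes a continuous $g_r$ with $g_r(L_i(x))=F_{i,r}(x,g_r(x))$ on $I$, and the companion Read-Bajraktarivi\'{c} operator $T_r$ on $\mathcal{C}_{y_{0,r},y_{N,r}}(I)$ is a contraction with fixed point $g_r$.

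The core of the induction is the passage from $r-1$ to $r$. Assuming $f\in\mathcal{C}^{r-1}(I)$ with $f^{(r-1)}=g_{r-1}$, I would iterate $T_{r-1}$ starting from a well-chosen $h_0\in\mathcal{C}^1(I)$ with $h_0\in\mathcal{C}_{y_{0,r-1},y_{N,r-1}}(I)$ and $h_0'\in\mathcal{C}_{y_{0,r},y_{N,r}}(I)$ (a two-point Hermite cubic suffices). Using $L_i^{-1}(x)=(x-b_i)/a_i$, a short computation shows that for any $h$ with these two properties one has, on the interior of each $I_i$,
\[
(T_{r-1}h)'(x)=\frac{\alpha_i\,h'(L_i^{-1}(x))+q_i^{(r)}(L_i^{-1}(x))}{a_i^{r}}=(T_rh')(x).
\]
The next step is to check that $T_{r-1}h\in\mathcal{C}^1(I)$ with $(T_{r-1}h)'=T_rh'$ on all of $I$, and that $T_{r-1}h$ inherits the four endpoint constraints imposed on $h_0$; iterating then gives $T_{r-1}^{\,n}h_0\in\mathcal{C}^1(I)$ and $(T_{r-1}^{\,n}h_0)'=T_r^{\,n}(h_0')$ for all $n$. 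Finally I would let $n\to\infty$: $T_{r-1}^{\,n}h_0\to g_{r-1}$ and $T_r^{\,n}(h_0')\to g_r$ uniformly (each $T_r$ being a contraction with the indicated fixed point), so by the classical theorem on differentiating a uniformly convergent sequence of $\mathcal{C}^1$ functions, $g_{r-1}\in\mathcal{C}^1(I)$ and $g_{r-1}'=g_r$; hence $f\in\mathcal{C}^{r}(I)$ and $f^{(r)}=g_r$. Taking $r=k$ closes the argument.

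The step I expect to be the main obstacle is establishing continuity of $(T_{r-1}h)'$ across the interior knots. Differentiating inside a subinterval is routine, but the two one-sided derivative limits at an interior knot $x_i$ are $F_{i,r}(x_N,h'(x_N))$ and $F_{i+1,r}(x_0,h'(x_0))$, and making them agree is exactly what forces both the join-up conditions $F_{i-1,r}(x_N,y_{N,r})=F_{i,r}(x_0,y_{0,r})$ and the specific endpoint data $y_{0,r},y_{N,r}$ into the hypotheses; the accompanying book-keeping---tracking which endpoint conditions are preserved by $T_{r-1}$ so that the iteration stays inside the correct $\mathcal{C}^1$ subspace---is the delicate part. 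Everything else reduces to the two theorems of Barnsley quoted above (existence of the FIF and contractivity of the Read-Bajraktarivi\'{c} operator) together with the standard interchange of limit and derivative.
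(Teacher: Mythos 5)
The first thing to say is that the paper contains no proof of this statement: it is quoted as background from \cite{BH} (with two evident typos, $y_{1,r}$ where $y_{0,r}$ is meant and $y_{\cdot,k}$ where $y_{\cdot,r}$ is meant in the join-up condition, both of which you silently and correctly repair), so the only meaningful comparison is with the argument in that source. Your proposal is correct: the admissibility check $|\alpha_i|/a_i^{r}<1$, the identification of $y_{0,r},y_{N,r}$ as fixed points of the endpoint maps, the intertwining identity $(T_{r-1}h)'=T_r h'$ on each subinterval, the observation that the level-$r$ join-up conditions are exactly what make the one-sided derivative limits $F_{i,r}(x_N,h'(x_N))$ and $F_{i+1,r}(x_0,h'(x_0))$ agree at interior knots, the verification that all four endpoint constraints are preserved under iteration, and the final appeal to the classical theorem on differentiating a uniformly convergent sequence of $\mathcal{C}^1$ functions fit together without a gap. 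Your route does differ from Barnsley--Harrington's own, which runs in the opposite direction: they first produce the continuous FIF $g_r$ for the top-level IFS and then show that its indefinite integral, with integration constants dictated by the fixed-point and join-up data, satisfies the functional equation of the level-$(r-1)$ IFS, concluding $g_{r-1}'=g_r$ from uniqueness of the fixed point rather than from term-by-term differentiation. The integration argument is shorter and needs neither a seed function $h_0$ nor the limit-interchange theorem; yours buys an explicit sequence of $\mathcal{C}^1$ pre-fractal iterates converging in the $\mathcal{C}^1$ sense, which is closer in spirit to how the present paper manipulates the Read-Bajraktarivi\'{c} operator elsewhere.
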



\section{Fractal Functions of More General Nature} \label{ffhistosec1}
In this section we note that by  simple modifications in the construction of continuous fractal interpolation function revisited in the previous section, we can break  continuity and/or interpolatory property of the fractal function, providing more flexibility. Although the actual fractal function
appearing in each case discussed in the sequel may be different with different properties, we shall steadfastly employ the same notation $f$ at each appearance. \\\\
\textbf{Case 1: Continuous (but not interpolatory) fractal function}\\\\
For the data $\{(x_i,y_i): i=0,1,2,\dots, N\}$, consider the IFS defined by the maps
\begin{equation}\label{DFIF7}\left\{
\begin{split}
L_i(x) =& ~ a_i x + b_i\\
F_i(x,y)=&~\alpha_i y + q_i(x),
\end{split}\right.
\end{equation}
where the continuous functions $q_i: I \to \mathbb{R}$ are chosen  such that
\begin{equation}\label{DFIF8}\left\{
\begin{split}
F_1(x_0,y_0) =&~y_0,\\
F_N(x_N,y_N)=&~y_N,\\
F_i(x_N,y_N)=F_{i+1}(x_0,y_0)=&~ \tilde{y_i},~ \tilde{y_i}\in [ y_i-\epsilon, y_i+ \epsilon], ~i=1,2,\dots, N-1.
\end{split}\right.
\end{equation}
Here $\epsilon$ may be interpreted as error tolerance in measurement or noise.
\begin{theorem}\label{thm1}
The fractal function $f$ corresponding to the IFS defined through (\ref{DFIF7})-(\ref{DFIF8}) is continuous and satisfies $|f(x_i)-y_i| \le \epsilon$ for all $i=0,1,\dots, N$.
\end{theorem}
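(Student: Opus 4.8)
The plan is to observe that the IFS of (\ref{DFIF7})--(\ref{DFIF8}) is nothing but an ordinary FIF-generating IFS for a \emph{relabelled} data set, and then to quote the results recalled in Section~\ref{ffhistosec2}. Accordingly, I would first set $\tilde y_0 := y_0$ and $\tilde y_N := y_N$, retaining the values $\tilde y_1, \dots, \tilde y_{N-1}$ supplied by (\ref{DFIF8}); with this convention $|\tilde y_i - y_i| \le \epsilon$ holds for every $i \in \{0, 1, \dots, N\}$, the endpoint cases being equalities.

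Next I would verify that $W_i(x,y) = \big(L_i(x), F_i(x,y)\big)$ satisfies, relative to the data $\{(x_i,\tilde y_i): i = 0,1,\dots,N\}$, exactly the hypotheses (\ref{DFIF1})--(\ref{DFIF2}) imposed in Section~\ref{ffhistosec2}. The conditions on $L_i$ are untouched; the vertical Lipschitz estimate follows from $|F_i(x,y) - F_i(x,y^*)| = |\alpha_i|\,|y-y^*|$ with $|\alpha_i| < 1$ (a vanishing $\alpha_i$ is harmless, it only makes the associated operator contract faster); and the join conditions of (\ref{DFIF2}), which read $F_i(x_0,\tilde y_0) = \tilde y_{i-1}$ and $F_i(x_N,\tilde y_N) = \tilde y_i$ for $i=1,\dots,N$, reduce --- because $\tilde y_0 = y_0$ and $\tilde y_N = y_N$ --- precisely to the three groups of equalities in (\ref{DFIF8}) after reindexing. (That continuous $q_i$ realising these finitely many endpoint constraints exist is clear, so the construction is consistent.) Having matched the hypotheses, I would invoke the fundamental theorem of \cite{Barnsley1} recalled in Section~\ref{ffhistosec2} --- equivalently, apply the Read-Bajraktarivi\'{c} operator $T$ on the complete metric space $\mathcal{C}_{y_0,y_N}(I)$ together with the Banach fixed point theorem --- to conclude that the IFS has a unique attractor which is the graph of a continuous function $f: I \to \R$ with $f(x_i) = \tilde y_i$ for all $i$. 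Then $f$ is continuous and $|f(x_i) - y_i| = |\tilde y_i - y_i| \le \epsilon$, which is the assertion.

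I do not expect a genuine obstacle here: the whole content --- and indeed the point of this section --- is the remark that perturbing the interpolation values $y_i$ to $\tilde y_i$ leaves the left and right endpoint data $y_0, y_N$ intact, so the functional-equation and operator apparatus of Section~\ref{ffhistosec2} carries over verbatim. The only care needed is the index bookkeeping that aligns (\ref{DFIF8}) with (\ref{DFIF2}) under the relabelling, together with the minor observation that degenerate scalings $\alpha_i = 0$ are permitted by (\ref{DFIF7}) even though (\ref{DFIF2}) is phrased with $s_i \in (0,1)$.
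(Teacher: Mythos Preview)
Your argument is correct. The route, however, differs from the paper's. The paper reproves the result from scratch: it reintroduces the Read--Bajraktarevi\'{c} operator $T$ on $\mathcal{C}_{y_0,y_N}(I)$, checks by hand that $(Tg)(x_i^-)=(Tg)(x_i^+)=\tilde y_i$ at each internal knot, verifies the endpoint conditions, shows $T$ is a contraction with factor $|\alpha|_\infty$, and then reads off $|f(x_i)-y_i|=|F_i(x_N,y_N)-y_i|\le\epsilon$ from the fixed-point equation. You instead observe that, once one sets $\tilde y_0:=y_0$, $\tilde y_N:=y_N$, the conditions (\ref{DFIF8}) are \emph{exactly} the join conditions (\ref{DFIF2}) for the data $\{(x_i,\tilde y_i)\}$, so the classical FIF theorem already recalled in Section~\ref{ffhistosec2} applies verbatim and yields a continuous $f$ with $f(x_i)=\tilde y_i$; the estimate is then immediate. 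Your reduction is shorter and makes transparent the point the section is trying to convey --- that nothing new is happening beyond a perturbation of the ordinates --- while the paper's explicit rerun of the operator argument has the modest pedagogical benefit of displaying the continuity check at the knots once more. Your remark about $\alpha_i=0$ is harmless but unnecessary: condition (\ref{DFIF2}) only asks for \emph{some} $s_i\in(0,1)$, and any such $s_i$ serves when $\alpha_i=0$.
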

\begin{proof}
Consider the closed metric subspace $\mathcal{C}_{y_0,y_N} (I):=\{g \in
\mathcal{C}(I): g(x_0)=y_0, g(x_N)=y_N\}.$ of $\mathcal{C}(I)$. Define $T: \mathcal{C}_{y_0,y_N} (I) \to \mathcal{C}_{y_0,y_N} (I)$ by
$$ (Tg) (x) = F_i \big(L_i^{-1}(x), g \circ L_i^{-1}(x)\big)=\alpha_i g \big( L_i^{-1}(x)\big) + q_i \big(L_i^{-1}(x)\big), ~x \in I_i=[x_{i-1}, x_i], ~i=1,2,\dots,N.$$ It follows at once that $Tg$ is continuous on $I_i=[x_{i-1}, x_i]$ for each $i=1,2,\dots, N$. Bearing in mind that $L_{i}^{-1}(x_i)=x_N$ and $L_{i+1}^{-1}(x_i)=x_0$, for $i=1,2,\dots, N-1$ we have
\begin{equation*}
\begin{split}
(Tg)(x_i^-)=&~ F_i\big(L_i^{-1}(x_i), g\big(L_i^{-1}(x_i)\big)=F_i(x_N, g(x_N))=F_i(x_N, y_N)= \tilde{y_i}\\
(Tg)(x_i^+)=&~ F_{i+1}\big(L_{i+1}^{-1}(x_i), g\big(L_{i+1}^{-1}(x_i)\big)= F_{i+1} (x_0, g(x_0))=F_{i+1}(x_0,y_0)=\tilde{y_i}.
\end{split}
\end{equation*}
Thus, $Tg$ is continuous at each of the internal knots, and consequently on $I=[x_0,x_N]$. On similar lines,
\begin{equation*}
\begin{split}
(Tg)(x_0)=&~ F_1\big(L_1^{-1}(x_0), g(L_1^{-1}(x_0))\big)=F_1(x_0, g(x_0))=F_1(x_0, y_0)= y_0\\
(Tg)(x_N)=&~ F_N\big(L_{N}^{-1}(x_N), g(L_{N}^{-1}(x_N))\big)= F_N (x_N, g(x_N))=F_N(x_N,y_N)=y_N,
\end{split}
\end{equation*}
demonstrating that $Tg$ is a well-defined map on $\mathcal{C}_{y_0,y_N}(I)$. Furthermore, for $x,y \in I_i$
\begin{equation*}
\begin{split}
|(Tg)(x)-(Th)(x)| =& ~ |\alpha_i| \big|g\big(L_i^{-1}(x)\big)-h\big(L_i^{-1}(x)\big) \big|\\
\le &~ |\alpha_i| \|g-h\|_\infty
\end{split}
\end{equation*}
Denoting $|\alpha|_\infty = \max \{|\alpha_i|: i=1,2,\dots, N\}$, the previous inequality stipulates
$$\|Tg-Th\|_\infty \le |\alpha|_\infty \|g-h\|_\infty,$$
 proving contractivity of $T$ in Chebyshev norm. Hence by the Banach fixed point theorem, $T$ has a unique fixed point $f$. Note that $f(x_0)=y_0$, $f(x_N)=y_N$, and
$$|f(x_i)-y_i| = |(Tf)(x_i)-y_i|= \big|F_i\big(L_i^{-1}(x_i), f(L_i^{-1}(x_i))\big)-y_i\big|=|F_i(x_N,y_N)-y_i| \le \epsilon,$$
completing the proof.
\end{proof}
\begin{remark}
Treating $y_0$ and $y_N$ as parameters and replacing third equation in (\ref{DFIF8}) with the condition $F_i(x_N,y_N)=F_{i+1}(x_0,y_0)$, we obtain a continuous fractal function not attached to any data set.
\end{remark}
\textbf{Case 2: Interpolatory (but not continuous) fractal function}\\\\
For a prescribed data set $\{(x_i,y_i):i=0,1,\dots,N\}$, set $I=[x_0,x_N]$, $I_i=[x_{i-1},x_i)$ for $i=1,2,\dots, N-1$ and $I_N=[x_{N-1},x_N]$. Let
$L_i: [x_0,x_N) \to [x_{i-1},x_i)$ be affine maps such that $L_i(x_0)=x_{i-1}$ and $L_i(x_N^-)=x_i$ for $i=1,2,\dots, N-1$ and $L_{N}: I \to I_N$ be affine map satisfying $L_N(x_0)=x_{N-1}$ and $L_N(x_N)=x_N$. Observe that in contrast to the case of continuous fractal function, here we deal with half-open subintervals with obvious modification for the last subinterval  so that each $x_i$ belongs to a subinterval univocally. Let $q_i : I \to \mathbb{R}$ be bounded function  so that  the bivariate map
$F_i: I \times \mathbb{R} \to \mathbb{R}$ defined by $F_i(x,y)= \alpha_i y + q_i(x)$ satisfy
$$F_i (x_0,y_0)= y_{i-1}, ~i=1,2,\dots, N; \quad F_N(x_N,y_N)=y_N.$$
\begin{theorem}
Consider the IFS determined by  the maps $L_i$ and $F_i$ defined in the previous paragraph. The corresponding fractal function $f$ is bounded and satisfies $f(x_i)=y_i$ for $i=0,1,\dots, N$.
\end{theorem}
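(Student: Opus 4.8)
The plan is to rerun the Read--Bajraktarević fixed point argument of Theorem \ref{thm1}, but on the Banach space of \emph{bounded} functions rather than the space of continuous functions, using the half-open interval convention to make the operator single-valued on all of $I$. Here the fractal function $f$ is to be understood as the fixed point of that operator (a genuine attractor, i.e.\ a compact set, is not available in general since the graph of a discontinuous $f$ need not be closed). Concretely, I would introduce the space $\mathcal{B}(I)$ of all bounded real-valued functions on $I$ with the supremum norm $\|g\|_\infty := \sup\{|g(x)| : x \in I\}$, which is complete, together with its closed subspace
\[
\mathcal{B}_{y_0,y_N}(I) := \{ g \in \mathcal{B}(I) : g(x_0) = y_0,\ g(x_N) = y_N\},
\]
closedness being immediate because uniform convergence preserves pointwise values.

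Next I would define $T : \mathcal{B}_{y_0,y_N}(I) \to \mathcal{B}_{y_0,y_N}(I)$ by
\[
(Tg)(x) = F_i\big(L_i^{-1}(x), g(L_i^{-1}(x))\big) = \alpha_i\, g\big(L_i^{-1}(x)\big) + q_i\big(L_i^{-1}(x)\big), \qquad x \in I_i,\ i = 1,2,\dots,N.
\]
Since the stated convention makes $I_1, \dots, I_N$ a partition of $I$, every $x \in I$ lies in exactly one $I_i$, so $Tg$ is unambiguously defined on all of $I$; boundedness of $g$ and of each $q_i$, together with $|\alpha_i| < 1$, gives $Tg \in \mathcal{B}(I)$. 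One checks $T$ maps the subspace into itself: $x_0 \in I_1$ and $L_1^{-1}(x_0) = x_0$ give $(Tg)(x_0) = F_1(x_0, y_0) = y_0$, while $x_N \in I_N$ and $L_N^{-1}(x_N) = x_N$ give $(Tg)(x_N) = F_N(x_N, y_N) = y_N$. Contractivity is as before: for $x \in I_i$, $|(Tg)(x) - (Th)(x)| = |\alpha_i|\,|g(L_i^{-1}(x)) - h(L_i^{-1}(x))| \le |\alpha_i|\,\|g - h\|_\infty$, so $\|Tg - Th\|_\infty \le |\alpha|_\infty\,\|g - h\|_\infty$ with $|\alpha|_\infty := \max_i |\alpha_i| < 1$. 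The Banach fixed point theorem then produces a unique fixed point $f \in \mathcal{B}_{y_0,y_N}(I)$, which is the required bounded fractal function, and $f$ satisfies the functional equation $f(L_i(x)) = F_i(x, f(x))$ on the appropriate domains.

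For the interpolation property, $f(x_0) = y_0$ and $f(x_N) = y_N$ hold by membership in $\mathcal{B}_{y_0,y_N}(I)$. For an interior knot $x_i$ with $1 \le i \le N-1$, the half-open convention places $x_i$ as the left endpoint of $I_{i+1}$, hence $L_{i+1}^{-1}(x_i) = x_0$, so
\[
f(x_i) = (Tf)(x_i) = F_{i+1}\big(x_0, f(x_0)\big) = F_{i+1}(x_0, y_0) = y_i,
\]
invoking the hypothesis $F_{i+1}(x_0, y_0) = y_{(i+1)-1} = y_i$. This gives $f(x_i) = y_i$ for all $i = 0, 1, \dots, N$.

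I do not expect a genuine obstacle here. The only points requiring care are the bookkeeping of the half-open subintervals — so that $T$ is single-valued everywhere and each knot evaluation selects the correct branch $I_{i+1}$ — and recording that $\big(\mathcal{B}(I), \|\cdot\|_\infty\big)$ is complete, which is what lets the argument proceed without any continuity of the $q_i$, using only their boundedness together with $|\alpha_i| < 1$.
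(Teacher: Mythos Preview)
Your proposal is correct and follows essentially the same approach as the paper: both work in the closed subspace $\mathcal{B}_{y_0,y_N}(I)$ of the Banach space $(\mathcal{B}(I),\|\cdot\|_\infty)$, verify that the Read--Bajraktarevi\'{c} operator is a well-defined contraction there via the half-open interval convention, and read off the interpolation conditions from the fixed-point equation using $L_{i+1}^{-1}(x_i)=x_0$ (the paper phrases this as $(Tg)(x_{i-1})=y_{i-1}$ via $x_{i-1}\in I_i$, which is the same computation with shifted index).
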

\begin{proof}
Note that the set $\mathcal{B}(I)$ of all real valued  bounded functions defined on $I$ endowed with the supremum norm is a Banach space. Consider the closed metric subspace
$$\mathcal{B}_{y_0,y_N} (I) = \{g \in \mathcal{B}(I): g (x_0)= y_0, ~g(x_N)=y_N\}$$
of $\mathcal{B}(I)$ and define $T: \mathcal{B}_{y_0,y_N}(I) \to \mathcal{B}_{y_0,y_N}(I)$ by
$$(Tg)(x) = F_i \big(L_i^{-1}(x), g \big(L_I^{-1}(x)\big) \big) = \alpha_i g \big( L_i^{-1}(x)\big) + q_i \big(L_i^{-1}(x)\big), ~x \in I_i,~ i=1,2,\dots,N.$$
It is plain to see that $Tg$ is a bounded function. For $i=1,2, \dots,N$, $x_{i-1}$ belongs univocally to $I_i$ and using definition of $Tg$
$$(Tg)(x_{i-1})= F_i \big(L_i^{-1}(x_{i-1}), g\big(L_i^{-1}(x_{i-1})\big)\big)=F_i\big(x_0,g(x_0)\big)=F_i \big(x_0,y_0\big)=y_{i-1}.$$
Further, $x_N$ belongs to $I_N$ and using $N$-th piece of the definition of $Tg$ one obtains
$$(Tg)(x_N) = F_N \big(L_N^{-1}(x_N), g\big(L_N^{-1}(x_N)\big)\big)=F_N\big(x_N,g(x_N)\big)=F_N \big(x_N,y_N\big)=y_N.$$
Therefore, $Tg$ is well-defined and maps into $\mathcal{B}_{y_0,y_N} (I)$. Following the proof of previous theorem we assert that $T$ is a contraction, and consequently the Banach fixed point theorem ensures the existence of a unique fixed point $f$. It follows at once that $f(x_i)=(Tf)(x_i)=y_i$ for $i=0,1, \dots, N$, delivering the promised result.
\end{proof}
\begin{remark}
Due to the lack of ``join-up"  condition  $F_i(x_N,y_N)=F_{i+1}(x_0,y_0)$, $i=1,2,\dots, N-1$,   the fractal function $f$ in the preceding theorem has a jump  discontinuity  at each of the internal knots (and hence possibly at many other points). Same is the case, even if $q_i$, $i=1,2,\dots, N$ are assumed to be continuous on $I$. Later, to derive additional properties of bounded (discontinuous)  fractal function, we shall assume that the maps $q_i$, $i=1,2,\dots, N$ involved in the IFS are Lipschitz continuous.
\end{remark}
\textbf{Case 3: Discontinuous  fractal function}\\\\
Here we consider fractal function in $\mathcal{B}(I)$, which is not attached to a data set. Let $\{x_0,x_1,\dots,x_N\}$ be a partition of $I=[x_0,x_N]$ satisfying $x_0<x_1<\dots<x_N$. As in the previous case, let $I_i=[x_{i-1},x_i)$ for $i=1,2,\dots, N-1$ and $I_N=[x_{N-1},x_N]$. Let
$L_i: [x_0,x_N) \to [x_{i-1},x_i)$ be affinities such that $L_i(x_0)=x_{i-1}$ and $L_i(x_N-)=x_i$ for $i=1,2,\dots, N-1$ and $L_{N}: I \to I_N$ be affine map satisfying $L_N(x_0)=x_{N-1}$ and $L_N(x_N)=x_N$.  For $i=1,2,\dots, N$, let $q_i$ be bounded function. Note that we do not require any additional conditions such as join-up conditions and end point conditions for the bivariate maps $F_i(x,y)=\alpha_i y + q_i(x)$.
\begin{theorem}\label{thm3}
 The fractal function $f$  corresponding to the IFS defined via the maps $L_i$ and $F_i$  of the previous paragraph is  bounded and satisfies the self-referential equation $$f \big(L_i(x) \big) = \alpha_i f(x) + q_i(x).$$
\end{theorem}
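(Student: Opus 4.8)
The plan is to run the Read--Bajraktarevi\'c fixed point argument of the previous two theorems, but this time on the \emph{entire} Banach space $\mathcal{B}(I)$ of bounded real-valued functions on $I$ equipped with the supremum norm, since there are now no endpoint conditions or internal join-up conditions to respect, hence no need to pass to a closed affine subspace. First I would define $T:\mathcal{B}(I)\to\mathcal{B}(I)$ by
\[
(Tg)(x)=F_i\big(L_i^{-1}(x),g(L_i^{-1}(x))\big)=\alpha_i\, g\big(L_i^{-1}(x)\big)+q_i\big(L_i^{-1}(x)\big),\qquad x\in I_i,\ i=1,2,\dots,N,
\]
and note that this is unambiguous: the half-open intervals $I_1,\dots,I_{N-1}$ together with $I_N$ partition $I$, so every $x\in I$ lies in exactly one $I_i$, and each $L_i$ is an affine bijection of its domain onto $I_i$, so $L_i^{-1}(x)$ is well-defined.

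Next I would verify that $T$ indeed takes values in $\mathcal{B}(I)$: from $|(Tg)(x)|\le|\alpha_i|\,|g(L_i^{-1}(x))|+|q_i(L_i^{-1}(x))|$ together with the boundedness of $g$ and of each $q_i$, one obtains $\|Tg\|_\infty\le|\alpha|_\infty\|g\|_\infty+\max_{1\le i\le N}\|q_i\|_\infty<\infty$, where $|\alpha|_\infty=\max_i|\alpha_i|$. Then, exactly as in the proof of Theorem \ref{thm1}, for $g,h\in\mathcal{B}(I)$ and $x\in I_i$,
\[
|(Tg)(x)-(Th)(x)|=|\alpha_i|\,\big|g(L_i^{-1}(x))-h(L_i^{-1}(x))\big|\le|\alpha|_\infty\,\|g-h\|_\infty,
\]
so that $\|Tg-Th\|_\infty\le|\alpha|_\infty\,\|g-h\|_\infty$ with $|\alpha|_\infty<1$. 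Hence $T$ is a contraction on the complete metric space $\mathcal{B}(I)$, and the Banach fixed point theorem yields a unique $f\in\mathcal{B}(I)$ with $Tf=f$; in particular $f$ is bounded.

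Finally I would read off the self-referential equation by evaluating $Tf=f$ at points of the form $L_i(x)$: since $L_i(x)\in I_i$ and $L_i^{-1}(L_i(x))=x$, the $i$-th branch of the definition of $T$ gives
\[
f\big(L_i(x)\big)=(Tf)\big(L_i(x)\big)=\alpha_i f(x)+q_i(x)
\]
for every $x$ in the domain of $L_i$ (that is, $x\in[x_0,x_N)$ for $i<N$ and $x\in I$ for $i=N$), which is the claimed identity. There is essentially no obstacle here: the entire difficulty of the earlier cases lay in checking that $Tg$ preserved continuity, interpolation values, or the internal join-up conditions, and all of these requirements have been deliberately removed, so only the boundedness estimate and the contraction bound remain, both immediate. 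The one point deserving a word of care is the well-definedness of $T$ --- namely that $\{I_i\}_{i=1}^{N}$ is genuinely a partition of $I$ --- which is guaranteed by the way the $L_i$ were constructed on the half-open subintervals.
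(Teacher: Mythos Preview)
Your argument is correct and follows essentially the same route as the paper: define the Read--Bajraktarevi\'c operator $T$ on all of $\mathcal{B}(I)$, observe that boundedness of $g$ and the $q_i$ makes $Tg$ bounded, verify the contraction estimate $\|Tg-Th\|_\infty\le|\alpha|_\infty\|g-h\|_\infty$, and invoke the Banach fixed point theorem. Your write-up is in fact slightly more explicit than the paper's in justifying well-definedness via the partition $\{I_i\}$ and in reading off the self-referential equation from $Tf=f$, but the underlying proof is the same.
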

\begin{proof}
Define a map $T: \mathcal{B}(I) \to \mathcal{B}(I)$ by
$$(Tg)(x) = F_i \big(L_i^{-1}(x), g \big(L_i^{-1}(x)\big) \big) = \alpha_i g \big( L_i^{-1}(x)\big) + q_i \big(L_i^{-1}(x)\big), ~x \in I_i,~ i=1,2,\dots,N.$$
Since $g$ and $q_i$ are in the linear space $\mathcal{B}(I)$ , it follows readily that $Tg \in \mathcal{B}(I)$ and $T$ is well-defined. As noted previously, for $g,h \in \mathcal{B}(I)$
\begin{equation*}
\begin{split}
\big|(Tg)(x)-(Th)(x)\big| =& ~ |\alpha_i| \big|g\big(L_i^{-1}(x)\big)-h\big(L_i^{-1}(x)\big) \big|\\
\le &~ |\alpha_i| \|g-h\|_\infty\\
\le &~ |\alpha|_\infty \|g-h\|_\infty,
\end{split}
\end{equation*}
and hence $$\|Tg-Th\|_\infty \le |\alpha|_\infty \|g-h\|_\infty.$$
Therefore, $T$ is a contraction and its fixed point $f$ enjoys the self-referential equation $$f \big(L_i(x) \big) = \alpha_i f(x) + q_i(x),$$ completing the proof.
\end{proof}
\begin{remark}
It is worth to note that instead of constant scaling factors, we may employ scaling functions $\alpha_i: I \to \mathbb{R}$ with suitable conditions
to provide fractal functions with more flexibility. To  to the least, we may assume that each $\alpha_i$ is bounded and $\|\alpha\|_\infty:= \max \{\|\alpha_i\|_\infty: i=1,2,\dots, N\}<1$
\end{remark}
\begin{remark}
Since fractal function $f$ appearing in Theorem \ref{thm3} is bounded, it belongs to $\mathcal{L}^\infty(I)$ and  also  to $\mathcal{L}^p(I)$ for $1 \le p < \infty$. In particular, $f$ is Lebesgue integrable.
\end{remark}
\begin{remark}\label{remalpaff}
As pointed out in  reference \cite{Barnsley1} for a  continuous function,  given  $f \in \mathcal{B}(I)$ one may choose $b \in \mathcal{B}(I)$  and consider $q_i(x):= f (L_i(x)) -\alpha_i b(x)$. The corresponding fractal function, which is termed $\alpha$-fractal function, denoted by $f^\alpha$ provides the self-referential analogue of $f$. In case $b$ depends linearly on $f$, then the correspondence $f \mapsto f^\alpha$ provides a linear operator on $\mathcal{B}(I)$, extending the notion of $\alpha$-fractal operator (see \cite{N1}) to the space $\mathcal{B}(I)$.
\end{remark}
\begin{remark}
On lines similar to Theorem \ref{thm1}, for a prescribed data set $\{(x_i,y_i): i=0,1,\dots, N\}$ by proper choices of $\alpha_i$ and $q_i$, we can construct a fractal function $f \in \mathcal{B}(I)$ (not necessarily continuous) that approximates the data in the sense that  $|f(x_i)-y_i| \le \epsilon$ for $i=0,1,\dots, N$. For instance, Lipschitz map $q_i$ can be taken as affinities $q_i(x)= q_{i0} x+ q_{i1}$, $i=1,2,\dots, N$ and the coefficients thereof can be selected so that the function values at the knots
\begin{equation*}
\begin{split}
& f(x_0) = \frac{q_{10} x_0 + q_{11}}{1-\alpha_1},\quad f(x_N)= \frac{q_{N0}x_N+ q_{N1}}{1-\alpha_N},\\
& f(x_i)= \alpha_{i+1} f(x_0) + q_{i+i,0} x_0 + q_{i+1,1},\quad i=1,2,\dots, N-1
\end{split}
\end{equation*}
are close enough to $y_i$, $i=0,1,\dots, N$.
\end{remark}
 \begin{example} We now illustrate previous results by constructing examples of  continuous and discontinuous fractal functions which interpolate or approximate the set of data  $\{(0,0), (0.5,0.5), \\(1,0)\}$. Fig. \ref{Fig1} shows the affine fractal function  in the standard setting (i.e., continuous and interpolatory) with scale vector $\alpha=(0.75, 0.75)$. Note that the graph of the FIF has Minkowski dimension $D\approx 1.585$ obtained as the unique solution of (see \cite{Barnsley1}) $$ \sum_{i=1}^N |\alpha_i| a_i^{D-1} =1.$$
 Fig.  \ref{Fig2} represents continuous affine fractal approximants, where the scale vector is taken to be $\alpha=(0.5,0.5)$ and coefficients appearing in the affinities $q_i(x)=q_{i0}x + q_{i1}$ are chosen such that $|f(x_i)-y_i| < \epsilon$ with $\epsilon= 0.1, 0.05,$ and $0.005$. Fig. \ref{Fig3} displays discontinuous fractal interpolation function corresponding to the data set wherein scaling vector is  $\alpha=(0.5,0.5)$ and the coefficient $c_1$ appearing in the affine map is chosen (at random) as $\frac{1}{8}$. In Fig. \ref{Fig4}, we break both continuity and interpolatory conditions inherent in a traditional affine FIF. The coefficients of affinities  are randomly chosen so that the graph passes close to the given data, that is,  we construct discontinuous fractal function $f$ satisfying  $|f(x_i)-y_i|< 0.1$.
\begin{figure}[h!]
\begin{center}
\begin{minipage}{0.9\textwidth}
\epsfig{file=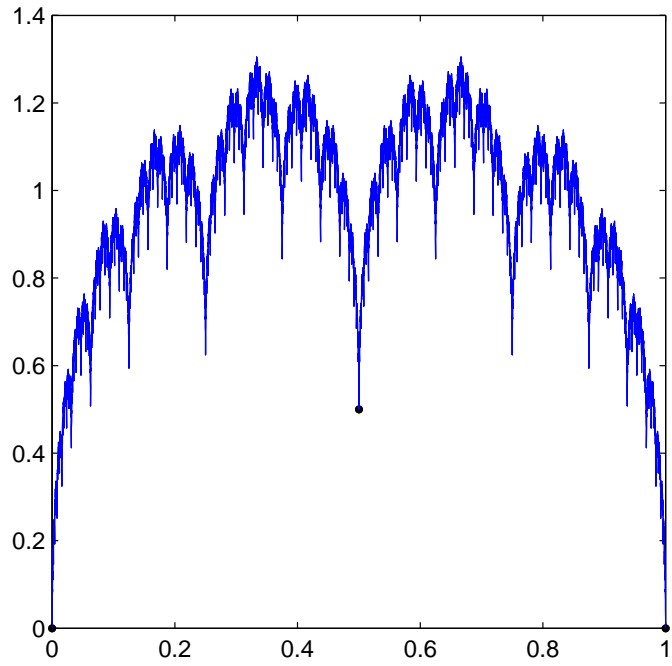,scale=0.9}
\end{minipage}\hfill
\caption{Graph of a continuous affine fractal interpolation function.} \label{Fig1}
\end{center}
\end{figure}
\begin{figure}[h!]
\begin{center}
\begin{minipage}{0.9\textwidth}
\epsfig{file=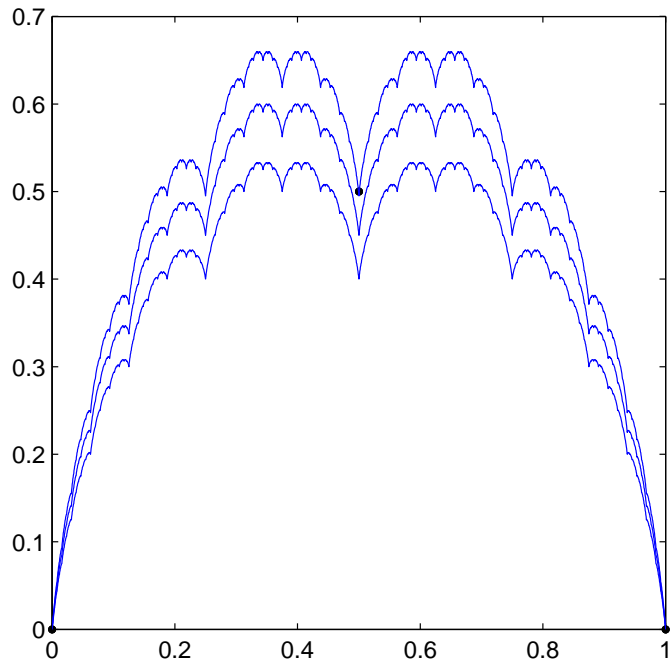,scale=0.9}
\end{minipage}\hfill
\caption{Graphs of continuous affine fractal (approximation) functions.} \label{Fig2}
\end{center}
\end{figure}
\begin{figure}[h!]
\begin{center}
\begin{minipage}{0.9\textwidth}
\epsfig{file=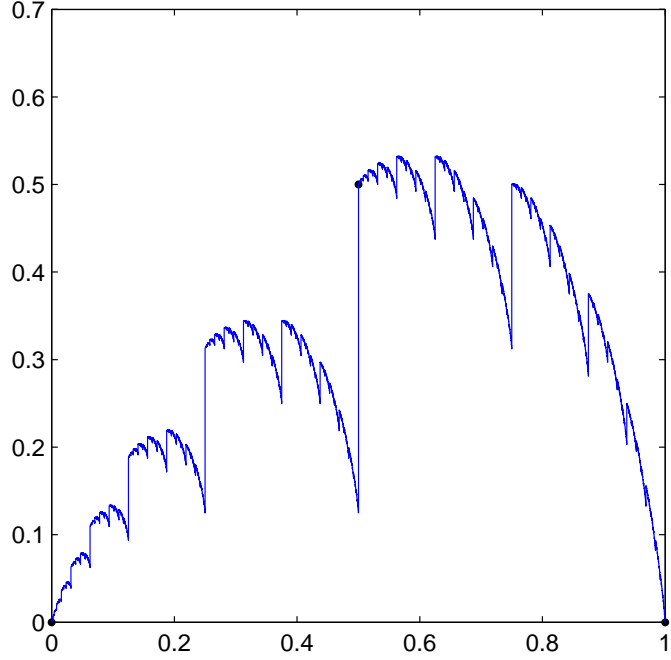,scale=0.9}
\end{minipage}\hfill
\caption{Graph of a discontinuous affine fractal interpolation function (The vertical lines display discontinuities).}\label{Fig3}
\end{center}
\end{figure}
\begin{figure}[h!]
\begin{center}
\begin{minipage}{0.9\textwidth}
\epsfig{file=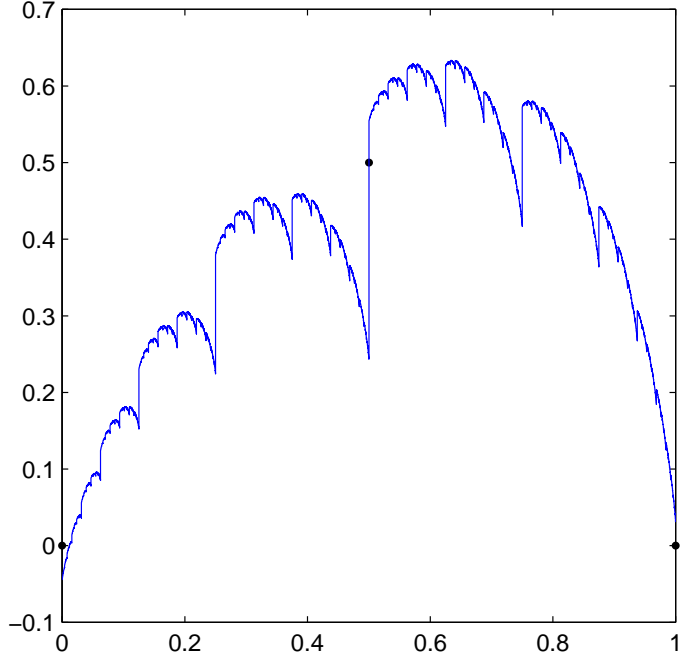,scale=0.9}
\end{minipage}\hfill
\caption{Graph of a discontinuous affine fractal (approximation) function (The vertical lines display discontinuities).} \label{Fig4}
\end{center}
\end{figure}
\end{example}
\section{Some Properties of Discontinuous Fractal Functions}\label{ffhistosec4}
Recall that a continuous fractal interpolation function can be obtained either as a fixed point of operators defined on suitable function spaces or as attractors of IFSs. In the previous section, we obtained discontinuous fractal functions as fixed points of RB-operators.
 Here we  first prove that the graph $G(f)$ of this bounded fractal function $f$ is related to the attractor of the IFS $\{I \times \mathbb{R}; W_i, i=1,2,\dots, N\}$. Further, we prove that the set of points of discontinuities of $f$ has Lebesgue measure zero. Our analysis follows closely (\cite{BHM}, Theorem 3.5, p. 403) which proves a similar result for local fractal functions. However, we note that for the validity of the theorem in \cite{BHM} suitable additional condition, for instance, Lipschitz continuity, is to be imposed on the maps $\lambda_i$ (which takes the role of $q_i$ in the present setting) that seems to be missing.\\
As a prelude, let us recall a pair of definitions.\\
If $f: X \to \mathbb{R}$ is a function on a metric space $(X,d)$, then the oscillation of $f$ on an open set $U$ is
$$ \omega_f (U) = \sup_{x \in U} f(x) - \inf_{x \in U} f(x)= \sup_{a,b \in U}|f(a)-f(b)| $$
and the oscillation of $f$ at a point $x^*$ is defined as
$$\omega_f (x^*) = \lim_{\epsilon \to 0} \omega_f \big(B_\epsilon(x^*)\big),$$
where $B_\epsilon (x^*)$ is the open ball at $x^*$ with radius $\epsilon$ defined by $B_\epsilon (x^*)= \{x \in X: d(x,x^*)<\epsilon\}.$
Note that $f$ is continuous at $x^*$ if and only if $\omega_f (x^*)=0$.
\begin{theorem}
Consider $W_i: I \times \mathbb{R} \to I \times \mathbb{R}$ defined by $W_i(x,y)=\big(L_i(x),F_i(x,y)\big)$ with $L_i$ and $F_i$ as in Theorem \ref{thm3}. Further assume that $q_i$ is Lipschitz continuous with  Lipschitz constant $Q_i$ and $Q=\max\{Q_i:i=1,2,\dots, N\}$.
Then the IFS $\{I \times \mathbb{R}; W_i, i=1,2,\dots, N\}$ is contractive with respect to a metric $d_\theta$ on $\R^2$ defined by
$$d_\theta\big((x,y),(x',y')\big)= |x-x'|+ \theta |y-y'|, $$
where $0< \theta < \dfrac{1-\max_{1\le i\le N} a _i}{Q}$. Furthermore, the unique attractor of this IFS is the closure of the graph of $f$.
\end{theorem}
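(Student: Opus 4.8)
\emph{Proof strategy.} The plan is to handle the two assertions separately: first show that the collage map $W$ associated with $\{W_i\}$ is a contraction on $\mathcal{H}(I\times\mathbb{R})$ equipped with the Hausdorff metric $h$ induced by $d_\theta$, so that a unique attractor $A$ exists; then identify $A$ with $\overline{G(f)}$ by proving that the latter is a compact fixed point of $W$ and invoking uniqueness.

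For contractivity, I would estimate directly. Since $L_i$ is affine with ratio $a_i$ and $F_i(x,y)=\alpha_i y+q_i(x)$ with $q_i$ being $Q_i$-Lipschitz, for $(x,y),(x',y')\in I\times\mathbb{R}$,
\[
d_\theta\big(W_i(x,y),W_i(x',y')\big)\le (a_i+\theta Q_i)|x-x'|+\theta|\alpha_i|\,|y-y'|\le c\,d_\theta\big((x,y),(x',y')\big),
\]
where $c=\max_i\max\{a_i+\theta Q_i,\ |\alpha_i|\}$. The hypothesis $\theta<\frac{1-\max_i a_i}{Q}$ forces $\theta Q_i\le\theta Q<1-\max_j a_j\le1-a_i$, hence $a_i+\theta Q_i<1$ for all $i$; together with $|\alpha_i|<1$ (built into the construction of $f$) this gives $c<1$. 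Each $W_i$ maps $I\times\mathbb{R}$ into itself (using the affine extension of $L_i$ to the closed interval $I$), and $(I\times\mathbb{R},d_\theta)$ is complete, so the usual estimate $h(W(B),W(C))\le c\,h(B,C)$ holds and Banach's fixed point theorem yields a unique compact $A$ with $W(A)=A$; since $W$ is contractive on all of $\mathcal{H}(I\times\mathbb{R})$, this $A$ is the attractor.

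For the identification, $\overline{G(f)}$ is compact because $f$ is bounded, so by uniqueness it suffices to prove $W(\overline{G(f)})=\overline{G(f)}$. Using the self-referential equation of Theorem \ref{thm3} — valid at $x$ precisely when $L_i(x)\in I_i$, i.e.\ for $x\in[x_0,x_N)$ when $i<N$ and for $x\in I$ when $i=N$ — one computes $W_i(G(f))=\{(y,f(y)):y\in I_i\}$ for $i=N$, and $W_i(G(f))=\{(y,f(y)):y\in I_i\}\cup\{(x_i,\alpha_i f(x_N)+q_i(x_N))\}$ for $i<N$, the extra point coming from evaluating $W_i$ at $x=x_N$, which the half-open decomposition of $I$ keeps out of the functional equation. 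Hence $W(G(f))=G(f)\cup P$ with $P=\{(x_i,\alpha_i f(x_N)+q_i(x_N)):1\le i\le N-1\}$ finite; since the $W_i$ are continuous and $\overline{G(f)}$ compact, $W(\overline{G(f)})=\overline{W(G(f))}=\overline{G(f)}\cup P$. Thus the theorem reduces to the inclusion $P\subseteq\overline{G(f)}$.

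Proving $P\subseteq\overline{G(f)}$ is the main obstacle. Fix $i<N$ and set $p_k:=L_N^{\,k}(x_0)$. As $L_N$ is an increasing affine contraction with fixed point $x_N$, one has $p_k\uparrow x_N$ with $p_k<x_N$, so the functional equation on the last branch gives the recursion $f(p_k)=\alpha_N f(p_{k-1})+q_N(p_{k-1})$, whence $f(p_k)=\alpha_N^{\,k}f(x_0)+\sum_{j=0}^{k-1}\alpha_N^{\,k-1-j}q_N(p_j)$; since $|\alpha_N|<1$ and $q_N$, being Lipschitz, is continuous at $x_N$, a routine estimate of this weighted sum gives $f(p_k)\to\frac{q_N(x_N)}{1-\alpha_N}=f(x_N)$. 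Then $L_i(p_k)\uparrow x_i$ with $L_i(p_k)\in I_i$, and the functional equation on $I_i$ yields $f(L_i(p_k))=\alpha_i f(p_k)+q_i(p_k)\to\alpha_i f(x_N)+q_i(x_N)$, so $(x_i,\alpha_i f(x_N)+q_i(x_N))\in\overline{G(f)}$. This establishes $P\subseteq\overline{G(f)}$, hence $W(\overline{G(f)})=\overline{G(f)}$ and $A=\overline{G(f)}$. I expect this endpoint step to be the delicate one: $G(f)$ is only ``essentially'' invariant under $W$, and one must exhibit explicit sequences along which $f$ returns to its endpoint values in order to absorb the finitely many stray image points into $\overline{G(f)}$ — and it is exactly here that continuity (not merely boundedness) of the $q_i$, as remarked in connection with \cite{BHM}, is indispensable.
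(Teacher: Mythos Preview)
Your contractivity argument is identical to the paper's. For the identification $A=\overline{G(f)}$, however, you take a genuinely different route. The paper establishes the two inclusions separately: for $\overline{G(f)}\subseteq W(\overline{G(f)})$ it pulls back along the partition $I=\bigcup_i I_i$, and for $W(\overline{G(f)})\subseteq\overline{G(f)}$ it deletes the knot images to form $\tilde G=G(f)\setminus\{(x_i,f(x_i))\}$, checks $W(\tilde G)\subseteq G(f)$, and then closes up via $W(\overline{G(f)})=W(\overline{\tilde G})\subseteq\overline{W(\tilde G)}\subseteq\overline{G(f)}$. You instead compute $W(G(f))=G(f)\cup P$ directly, pass to closures to get $W(\overline{G(f)})=\overline{G(f)}\cup P$, and reduce everything to the single inclusion $P\subseteq\overline{G(f)}$, which you verify with the explicit iterates $p_k=L_N^{\,k}(x_0)\uparrow x_N$ and the recursion they satisfy. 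Your approach is more hands-on but also more self-contained: the paper's step $W(\overline{G(f)})=W(\overline{\tilde G})$ tacitly assumes $\overline{\tilde G}=\overline{G(f)}$, i.e.\ that none of the removed points $(x_i,f(x_i))$ is isolated in $G(f)$, and it is precisely your endpoint-sequence argument (together with the obvious analogue using $L_1^{\,k}$ near $x_0$) that would justify that assumption. So what the paper gains in brevity, your argument supplies in completeness, and your remark that continuity of the $q_i$ is indispensable at exactly this step is on point.
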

\begin{proof}
Let $(x,y), (x',y') \in I \times \R$. We have
\begin{equation*}
\begin{split}
d_\theta\big(W_i(x,y),W_i(x',y')\big) =&~d_\theta \Big(\big(L_i(x), \alpha_i y+ q_i (x)\big), \big(L_i(x'), \alpha_i y'+ q_i (x')\big)\Big)\\
=&~ |L_i(x)-L_i(x')| + \theta \big| \alpha_i y+ q_i(x) -\big(\alpha_i y' + q_i (x') \big)\big|\\
\le &~ a_i |x-x'|+ \theta \big[ |\alpha_i| |y-y'| + |q_i(x)-q_i(x')| \big]\\
=&~ (a_i + \theta Q_i) |x-x'| + \theta |\alpha_i| |y-y'|\\
\le &~ \max \{a_i+ \theta Q_i, |\alpha_i|\} \big[|x-x'|+ \theta |y-y'|\big].
\end{split}
\end{equation*}
 For $\theta$ as mentioned in the statement of the theorem, it follows that $K:= \max \{a_i+ \theta Q_i, |\alpha_i|\} <1$ and consequently that $W_i$, $i=1,2,\dots, N$ are contraction maps. That is, the IFS $\{I \times \mathbb{R}; W_i, i=1,2,\dots, N\}$ is contractive, and hence it has a unique attractor, say $A$. Let $G(f):=\{(x,f(x)): x \in I\}$ denote the graph of $f$ and $\mathcal{H} (I \times \R)$ denote the space of all nonempty compact subsets of $I \times \R$ endowed with Hausdorff metric.  Consider the Hutchinson operator $W: \mathcal{H} (I \times \R) \to \mathcal{H} (I \times \R)$ defined by
\[
W(B) = \cup _{i=1}^N W_i (B)
\]
We have to show that $W (\overline{G(f)})= \overline{G(f)}$. Let us extend $W$ to $2^{I \times \R}$, the powerset of $I \times \R$. With a slight abuse of notation we shall denote the extension  also  by $W$. Consider $(x,y) \in \overline{G(f)}$. Then there exists a sequence of points $x_n \in I$  such that $\lim_{n \to \infty} (x_n, f(x_n))= (x,y)$. Since $I= \cup_{i=1}^N I_i$ and $I_i \cap I_j = \emptyset $ for all $i \neq j$, for each fixed $x_n$, we have $x_n=L_i (x_n')$, and hence
\begin{equation*}
\begin{split}
(x_n, f(x_n))= \big(L_i(x_n'), f(L_i(x_n'))\big)= &~\big(L_i(x_n'), F_i(x_n', f(x_n'))\big)\\
=&~W_i \big(x_n', f(x_n')\big)\in W(G(f)) \subseteq W(\overline{G(f)}).
\end{split}
\end{equation*}
Consequently, $(x,y) \in \overline{W(\overline{G(f)})}=W(\overline{G(f)})$, showing that
$$\overline{G(f)} \subseteq W(\overline{G(f)}).$$ Next to show that $W(\overline{G(f)})\subseteq\overline{G(f)}$, we first show that
$W(\tilde{G}) \subseteq G(f)$, where $\tilde{G}:= G(f)\setminus \big\{(x_i, f(x_i)), i=0,1,\dots, N\big\}$. Let $(x,y) \in W(\tilde{G})$. Then
$(x,y) \in W_i (\tilde{G})$ for some $i \in \{1,2,\dots, N\}$. Therefore, there is an $(x', y')$ such that $x' \in I \setminus \{x_0,x_1,\dots, x_N\}$, $y'=f(x')$, $x=L_i(x')$, and
$$y= F_i(x',y')=\alpha_i f(x')+ q_i(x')= \alpha_i f \big(L_i^{-1}(x)\big)+ q_i\big(L_i^{-1}(x)\big)=f(x),$$
so that $(x,y) \in G(f)$, proving $W(\tilde{G}) \subseteq G(f)$. Using this and continuity of the Hutchinson map
$$W(\overline{G(f)})= W(\overline{\tilde{G}}) \subseteq  \overline{W(\tilde{G})} \subseteq \overline{G(f)},$$
completing the proof.
\end{proof}
\noindent The foregoing theorem shows that the graph of a bounded  discontinuous fractal interpolant $f$ can be
approximated by the chaos game algorithm \cite{BAV}.
\begin{remark}
Note that in the case of continuous fractal function $f$, the graph $G(f)$ is closed, and hence the attractor coincides with $G(f)$.
\end{remark}
\noindent This next theorem which is just a slight variant of the Collage theorem (see \cite{BBook}) hints towards the choices of maps $W_i$ so that the bounded fractal function $f$ is close to a prescribed $\Phi \in \mathcal{B}(I)$.
\begin{theorem}
Let the mappings $W_i$, $i=1,2,\dots, N$ in the IFS used to generate the bounded discontinuous fractal function $f$ (Cf. Theorem \ref{thm3}) be chosen such that $\|\Phi- T\Phi\|_\infty < \epsilon$, where $T$ is the RB-operator whose fixed point yields $f$. Then $$\|\Phi-f\|_\infty < \frac{\epsilon}{1-|\alpha|_\infty}.$$
\end{theorem}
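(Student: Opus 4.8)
The plan is to run the classical collage-type estimate that turns a small ``collage distance'' $\|\Phi - T\Phi\|_\infty$ into a small approximation error $\|\Phi - f\|_\infty$, using only two facts already established in the excerpt: that $f$ is the fixed point of the RB-operator $T$ (Theorem \ref{thm3}), so $Tf = f$, and that $T$ is a contraction on $\mathcal{B}(I)$ with contractivity factor $|\alpha|_\infty < 1$ with respect to the supremum norm (from the proof of Theorem \ref{thm3}).

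First I would insert $T\Phi$ via the triangle inequality:
$$\|\Phi - f\|_\infty \le \|\Phi - T\Phi\|_\infty + \|T\Phi - f\|_\infty.$$
Then I would rewrite $f = Tf$ in the second term and apply contractivity of $T$:
$$\|T\Phi - f\|_\infty = \|T\Phi - Tf\|_\infty \le |\alpha|_\infty \, \|\Phi - f\|_\infty.$$
Combining the two displays gives
$$\|\Phi - f\|_\infty \le \|\Phi - T\Phi\|_\infty + |\alpha|_\infty \, \|\Phi - f\|_\infty,$$
and since $\Phi, f \in \mathcal{B}(I)$ the quantity $\|\Phi - f\|_\infty$ is finite, so I may transpose the last term to obtain $(1 - |\alpha|_\infty)\|\Phi - f\|_\infty \le \|\Phi - T\Phi\|_\infty$. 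Dividing by $1 - |\alpha|_\infty > 0$ and using the hypothesis $\|\Phi - T\Phi\|_\infty < \epsilon$ yields the claimed strict bound $\|\Phi - f\|_\infty < \dfrac{\epsilon}{1 - |\alpha|_\infty}$.

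There is essentially no obstacle here: the only points worth flagging are that one must know $\|\Phi - f\|_\infty < \infty$ before rearranging (guaranteed by boundedness), and that the strictness of the final inequality is inherited directly from the strict hypothesis on the collage distance. No structural assumptions on the $q_i$ (continuity, Lipschitz continuity, join-up or endpoint conditions) are needed, since the argument is purely metric-space-theoretic and relies solely on the contraction property of $T$ on $\mathcal{B}(I)$.
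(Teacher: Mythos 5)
Your proof is correct, but it takes a genuinely different (and shorter) route than the paper's. You insert $T\Phi$ once via the triangle inequality, use $f = Tf$ together with the contractivity $\|T\Phi - Tf\|_\infty \le |\alpha|_\infty\|\Phi - f\|_\infty$, and then transpose, correctly noting that the rearrangement is legitimate because $\|\Phi - f\|_\infty$ is finite in $\mathcal{B}(I)$. The paper instead runs the Picard iteration: it writes $f = \lim_{m\to\infty} T^m\Phi$, telescopes $\Phi - T^m\Phi = \sum_{i=1}^m (T^{i-1}\Phi - T^i\Phi)$, bounds each summand by $|\alpha|_\infty^{i-1}\|\Phi - T\Phi\|_\infty$, and sums the geometric series to reach the same bound $\epsilon/(1-|\alpha|_\infty)$. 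The two arguments are equally standard proofs of the Collage estimate; yours is more economical (no limits, no series) and makes the needed finiteness hypothesis explicit, while the paper's version has the mild advantage of exhibiting the error after $m$ iterations, $\sum_{i=1}^m |\alpha|_\infty^{i-1}\|\Phi - T\Phi\|_\infty$, which is sometimes useful for algorithmic truncation. Your closing remark that no structural assumptions on the $q_i$ are needed beyond what makes $T$ a contraction on $\mathcal{B}(I)$ is also accurate and consistent with the paper's usage.
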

\begin{proof}
By the Banach fixed point theorem, the fractal function $f= \lim_{m \to \infty} T^m g$,  where $g \in \mathcal{B}(I)$ is arbitrary.  Therefore we see that
\begin{equation*}
\begin{split}
\|\Phi-f\|_\infty  = \|\Phi-\lim_{m \to \infty} T^m \Phi\|_\infty
=&~ \lim_{m \to \infty} \|\Phi-T^m \Phi\|_\infty\\
=&~\lim_{m \to \infty} \Big\| \sum_{i=1}^m (T^{i-1}\Phi- T^i \Phi)\Big\|_\infty\\
\le &~ \lim_{m \to \infty}\sum_{i=1}^m \|T^{i-1}\Phi- T^i \Phi\|_\infty\\
=&~ \lim_{m \to \infty}\sum_{i=1}^m \|T^{i-1} (\Phi-T \Phi)\|_\infty\\
\le&~ \lim_{m \to \infty}\sum_{i=1}^m |\alpha|_\infty^{i-1} \|\Phi-T \Phi\|_\infty\\
< &~ \frac{\epsilon}{1-|\alpha|_\infty},
\end{split}
\end{equation*}
providing the assertion.
\end{proof}
\begin{theorem} \label{thm4a}
 The set of points of  discontinuities for the function $f$ (Cf. Theorem \ref{thm3}) is a Lebesgue null set. In particular, $f$ is Riemann integrable.
\end{theorem}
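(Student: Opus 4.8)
The plan is to adapt the oscillation-function argument of \cite{BHM}, being careful to use the standing assumption of this section (see the remark following Theorem \ref{thm3}, and the hypotheses of the preceding theorems) that each $q_i$ is Lipschitz continuous --- precisely the ingredient whose absence the authors flag in \cite{BHM}. Write $D:=\{x\in I:\omega_f(x)>0\}$ for the set of discontinuities of $f$, and recall that $\|f\|_\infty<\infty$ by Theorem \ref{thm3}. The heart of the matter is a scaling law for the oscillation, inherited from the self-referential equation: \emph{if $s$ lies in the interior of $I_i$, then $\omega_f(s)=|\alpha_i|\,\omega_f\big(L_i^{-1}(s)\big)$.} Indeed, on $I_i$ one has $f=\alpha_i\,(f\circ L_i^{-1})+q_i\circ L_i^{-1}$; a small ball about an interior point $s$ of $I_i$ remains inside $I_i$; for any function and any open set $U$ the oscillation satisfies $\omega_{cg}(U)=|c|\,\omega_g(U)$ and $\big|\omega_{g+h}(U)-\omega_g(U)\big|\le\omega_h(U)$; the summand $q_i\circ L_i^{-1}$ is continuous (this is where Lipschitz continuity of $q_i$ is used) and hence has zero oscillation at $s$; and composition with the affine homeomorphism $L_i$ carries the oscillation at $L_i^{-1}(s)$ to the oscillation at $s$. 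Letting the ball shrink yields the identity.

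Iterating along words $\sigma=(i_1,\dots,i_n)$ and writing $L_\sigma=L_{i_1}\circ\cdots\circ L_{i_n}$, it follows that for $s$ in the interior of the level-$n$ cell $L_\sigma(I)$,
\[
\omega_f(s)=|\alpha_{i_1}\cdots\alpha_{i_n}|\;\omega_f\big(L_\sigma^{-1}(s)\big)\le |\alpha|_\infty^{\,n}\cdot 2\|f\|_\infty .
\]
The level-$n$ cells partition $I$ into $N^n$ subintervals, so their open interiors exhaust $I$ except for a finite set $P_n$ of endpoints, and hence $P:=\bigcup_{n\ge1}P_n$ is countable. If $x\in I\setminus P$ then for every $n$ the point $x$ is interior to some level-$n$ cell, so $\omega_f(x)\le|\alpha|_\infty^{\,n}\cdot 2\|f\|_\infty$; letting $n\to\infty$ and using $|\alpha|_\infty<1$ forces $\omega_f(x)=0$. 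Therefore $D\subseteq P$, so $D$ is countable, in particular a Lebesgue null set. (In fact every discontinuity of $f$ is of the form $L_{i_1}\circ\cdots\circ L_{i_k}(x_j)$ for some mesh point $x_j$.)

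Finally, $f$ is bounded on the compact interval $I$ and its set of discontinuities is null, so by Lebesgue's criterion for Riemann integrability $f$ is Riemann integrable on $I$. The one genuinely delicate step is the oscillation scaling law --- more precisely, the assertion that adjoining the continuous term $q_i\circ L_i^{-1}$ leaves the oscillation at an interior point unchanged --- which is exactly where continuity of $q_i$ is indispensable; the tiling bookkeeping, the countability of $P$, and the appeal to Lebesgue's criterion are all routine.
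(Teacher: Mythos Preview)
Your proof is correct and follows the same overall strategy as the paper: confine the discontinuities of $f$ to a countable set of images of knot points under finite compositions of the $L_i$, establish continuity off that set by an oscillation argument driven by the contraction $|\alpha|_\infty<1$, and finish with Lebesgue's criterion.

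The execution differs in two ways worth noting. First, the paper arrives at the exceptional countable set via the pre-fractal iterates $g_n=T^n g_0$ and counts their jump discontinuities; you go directly to the cell-endpoint set $P$, bypassing the iterates entirely. Second, and more substantively, the paper bounds the oscillation of $f$ over the whole level-$k$ cell $L_{\sigma|_k}(I)$ by $|\alpha|_\infty^k\,\omega_f(I)+Q|I|\,a^k/\bigl||\alpha|_\infty-a\bigr|$, carrying an explicit Lipschitz correction in $Q$ throughout the recursion; your pointwise identity $\omega_f(s)=|\alpha_i|\,\omega_f\bigl(L_i^{-1}(s)\bigr)$ at interior points absorbs the $q_i$-contribution at the outset (continuity alone forces the pointwise oscillation of $q_i\circ L_i^{-1}$ to vanish), yielding the cleaner bound $\omega_f(s)\le 2|\alpha|_\infty^{\,n}\|f\|_\infty$ with no Lipschitz constant appearing. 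In particular, your argument actually needs only continuity of the $q_i$, not the full Lipschitz hypothesis the paper invokes.
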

\begin{proof}
Since $f$ is obtained by the application of Banach fixed point theorem on the Read-Bajraktarevi\'c operator $T$, $f=\lim_{n \to \infty} g_n$, where $g_n = T g_{n-1}$, and  $g_0 \in \mathcal{B}(I)$ is arbitrary. We choose $g_0 \equiv 1$, the constant function on $I=[x_0,x_N]$. Note that
$$g_n \big(L_i(x)\big)= \alpha_i g_{n-1}(x) + q_i (x), ~ x \in I, ~i=1,2,\dots, N.$$
The function $g_1$ may have finite jump discontinuities at the internal knot points $x_1, x_2, \dots, x_{N-1}$. In general, $g_n$ may have finite jump discontinuities at the internal knot points $x_1, x_2, \dots, x_{N-1}$ of the partition and possibly at the image  points $L_{i_1} \circ L_{i_2} \circ \dots L_{i_{n-1}}(x_r)$ of the internal knots. Therefore by a simple arithmetic, it follows that the $n$-th pre-fractal function $g_n$ has at most $(N-1) (N^{n-1}+1)$ points of discontinuity in $I$. Let $A_n$ denote the set of points of discontinuities of $g_n$, then $|A_n| \le (N-1) (N^{n-1}+1)$. Further, assume $A= \cup_{n=1}^\infty A_n$, so that $|A|  \le \aleph_0$, that is $A$ is countable. We prove that $f$ is continuous at $x \in I \setminus A$. Choose an $\epsilon >0$. Taking into account that $q_i$, $i=1,2,\dots, N$ are Lipschitz functions with Lipschitz constants $Q_i$ and $Q=\max\{Q_i:i=1,2,\dots,N\}$ , from the fixed point equation
$$f\big(L_i(x)\big) = \alpha_i f (x) + q_i (x)$$
it follows that
\begin{equation}\label{dffri1}
\begin{split}
\omega_f \big(L_i(I)\big) =&~ \sup_{x,y \in I} \big|f(L_i(x))-f(L_i(y))\big|\\
= &~ \sup_{x,y \in I} \big|\alpha_i (f(x)-f(y))+ q_i(x)-q_i(y)\big|\\
\le & ~|\alpha|_\infty \omega_f(I) + Q|I|,
\end{split}
\end{equation}
where $|\alpha|_\infty:= \max\{|\alpha_i|: i=1,2,\dots,N\}$ and $|I|$ is the length of $I$.

Let $\Omega=\{1,2,\dots,N\}$ and let $\Omega^\infty$ be the set of all infinite sequences $\sigma=\sigma_1 \sigma_2 \sigma_3\dots$ of elements in $\Omega$. Then $(\Omega, d_{\Omega})$,  refereed to as code space, is a compact metric space,  with the metric $d_{\Omega}$  defined by $d_{\Omega} (\sigma, \omega)=2^{-k}$, where $k$ is the least index for which $\sigma_k \neq \omega_k$. For any finite code $\sigma|_k= \sigma_1 \sigma_2 \dots \sigma_k$ from (\ref{dffri1}) we obtain
\begin{equation*}
\begin{split}
\omega_f \big(L_{\sigma|_k}(I)\big) = ~& \omega_f \big(L_{\sigma_1} \circ L_{\sigma_2} \circ \dots \circ L_{\sigma_k} (I)\big)\\
\le &~ |\alpha|_\infty w_f \big(L_{\sigma_2} \circ \dots \circ L_{\sigma_k} (I)\big) + Q a^{k-1} |I|,
\end{split}
\end{equation*}
which on recursion yields
\begin{equation}\label{dffri2}
\omega_f \big(L_{\sigma|_k}(I)\big) = |\alpha|_\infty ^k \omega_f(I) + Q |I| \frac{a^k}{\big||\alpha|_\infty -a\big|}.
\end{equation}
Consider the contractive IFS $\{I; L_i,i=1,2,\dots,N\}$ with attractor $I$. The limit $\lim_{k \to \infty} L_{\sigma_1} \circ L_{\sigma_2} \circ \dots L_{\sigma_k}(a)$ is a single point independent of $a \in I$ and the coding map
 $\pi: \Omega^ \infty \to I$
 $$\pi (\sigma):= \lim_{k \to \infty} L_{\sigma_1} \circ L_{\sigma_2} \circ \dots L_{\sigma_k}(a)$$
 is continuous and surjective. Therefore there exists $\sigma \in \Omega^ \infty$ such that
 $$x= \pi (\sigma) = \cap_{k=1}^\infty L_{\sigma|_k} (I).$$
For any $k \in N$, there exists a compact interval $I_k$ such that
$$x \in I_k \subset \cap_{i=1}^k L_{\sigma|_i} (I).$$
Set $J:= L_{\sigma|_k}^{-1} (I_k)$, where $\sigma|_k^{-1}= L_{\sigma_k} ^{-1} \circ L_{\sigma_{k-1}}^{-1} \circ \dots \circ L_{\sigma_1}^{-1}$.
In view of (\ref{dffri2}) we obtain
$$\omega_f(I_k)= \omega_f \big(L_{\sigma|_k}(J)\big) \le |\alpha|^k \omega_f(I)+ Q |J| \frac{a^k}{\big||a|-|\alpha|_\infty\big|}.$$
Since $f$ is bounded on $I$, $|J| \le x_N-x_0$, and $a^k \to 0$ as $k \to \infty$, we can choose $k$ to be large enough so that each summand in the previous inequality is less than  $\frac{\epsilon}{2}$. Consequently, $\omega_f(I_k)< \epsilon$. Since $\epsilon$ is arbitrary, we deduce that $\omega_f(x)=0$, and hence $f$ is continuous at $x \in I \setminus A$. That $f$ is Riemann integrable follows now from a standard result in analysis which states that a real-valued bounded function $f$ is Riemann integrable if and only if set of  points of discontinuities for $f$ has Lebesgue measure zero, see for instance, \cite{Rudin}.
\end{proof}
\begin{theorem}\label{thm4}
Let $f$ be the discontinuous  fractal function given in Theorem \ref{thm3}. For each $m \in \mathbb{N} \cup \{0\}$, the moment integral
$$f_m = \int_{I} x^m f(x)~ \mathrm{d}x$$
can be explicitly evaluated recursively in terms of the lower moment integrals $f_{m-1}, f_{m-2}, \dots, f_0$, the scaling factors $\alpha_i$, and the moment
$$Q_m = \int_{I} x^m Q(x)~ \mathrm{d}x,$$
 where the function $Q: I \to \mathbb{R}$ is defined as
 $$Q(x)= q_i \circ L_i^{-1}(x), ~\text{for}~ x \in I_i.$$
\end{theorem}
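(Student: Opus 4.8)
The plan is to exploit the self-referential equation $f\big(L_i(x)\big)=\alpha_i f(x)+q_i(x)$ together with additivity of the Riemann integral over the partition $I=\bigcup_{i=1}^N I_i$ and the affine change of variables $x=L_i(u)$. Since $f$ is Riemann integrable by Theorem~\ref{thm4a}, and each $q_i$ is Lipschitz so that $Q$ (being piecewise Lipschitz with at most finitely many jumps, at the knots) is also Riemann integrable, all the integrals below are meaningful and the manipulations are legitimate.

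First I would write, using that the $I_i$ cover $I$ and overlap only in finitely many points,
$$f_m=\sum_{i=1}^N\int_{I_i}x^m f(x)~\mathrm{d}x.$$
In the $i$-th summand I substitute $x=L_i(u)=a_ix+b_i$ so that $\mathrm{d}x=a_i~\mathrm{d}u$ and $u$ ranges over $I$; invoking the functional equation this gives
$$\int_{I_i}x^m f(x)~\mathrm{d}x=a_i\int_I (a_iu+b_i)^m\big(\alpha_i f(u)+q_i(u)\big)~\mathrm{d}u.$$
Summing over $i$ and noting that the same substitution applied to the definition of $Q$ yields $\sum_{i=1}^N a_i\int_I (a_iu+b_i)^m q_i(u)~\mathrm{d}u=\int_I x^mQ(x)~\mathrm{d}x=Q_m$, I obtain
$$f_m=Q_m+\sum_{i=1}^N a_i\alpha_i\int_I (a_iu+b_i)^m f(u)~\mathrm{d}u.$$

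Next I would expand $(a_iu+b_i)^m=\sum_{k=0}^m\binom{m}{k}a_i^k b_i^{m-k}u^k$, interchange the finite sums, and recognize $\int_I u^k f(u)~\mathrm{d}u=f_k$, obtaining
$$f_m=Q_m+\sum_{k=0}^m\binom{m}{k}\Big(\sum_{i=1}^N\alpha_i a_i^{k+1}b_i^{m-k}\Big)f_k.$$
Peeling off the $k=m$ term and transposing it gives
$$\Big(1-\sum_{i=1}^N\alpha_i a_i^{m+1}\Big)f_m=Q_m+\sum_{k=0}^{m-1}\binom{m}{k}\Big(\sum_{i=1}^N\alpha_i a_i^{k+1}b_i^{m-k}\Big)f_k.$$
It remains to check that the scalar $1-\sum_{i=1}^N\alpha_i a_i^{m+1}$ does not vanish: since the $a_i$ are positive with $\sum_{i=1}^N a_i=1$ and $|\alpha_i|<1$, one has $a_i^{m+1}\le a_i$, whence $\big|\sum_{i=1}^N\alpha_i a_i^{m+1}\big|\le\sum_{i=1}^N|\alpha_i|a_i<\sum_{i=1}^N a_i=1$. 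Dividing through, and observing that for $m=0$ the sum over $k$ is empty so that $f_0=Q_0/\big(1-\sum_{i=1}^N\alpha_i a_i\big)$, we arrive at the asserted recursion
$$f_m=\frac{1}{1-\sum_{i=1}^N\alpha_i a_i^{m+1}}\left[Q_m+\sum_{k=0}^{m-1}\binom{m}{k}\Big(\sum_{i=1}^N\alpha_i a_i^{k+1}b_i^{m-k}\Big)f_k\right].$$

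There is no deep obstacle here; once the functional equation is in hand the computation is essentially bookkeeping. The only points needing a little care are the measure-theoretic ones — that the affine change of variables is valid for the merely Riemann integrable (not continuous) function $f$, which is exactly where Theorem~\ref{thm4a} is invoked, together with the elementary fact that an affine image of a Lebesgue null set is null, so that $f\circ L_i$ remains Riemann integrable; and that the half-open nature of the $I_i$ is irrelevant to these integrals — plus the verification that the leading coefficient is nonzero, for which the normalization $\sum_{i=1}^N a_i=1$ and the contractivity $|\alpha_i|<1$ are precisely what is required.
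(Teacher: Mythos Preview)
Your proof is correct and follows essentially the same approach as the paper's own proof: split $f_m$ over the subintervals, apply the self-referential equation with the affine change of variables, expand binomially, and solve for $f_m$. You are in fact slightly more thorough, since you verify the nonvanishing of the denominator $1-\sum_{i=1}^N\alpha_i a_i^{m+1}$ for all $m$ (the paper checks only the case $m=0$) and you comment explicitly on the legitimacy of the change of variables for the merely Riemann-integrable $f$.
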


\begin{proof}
In view of the previous theorem it follows at once that the moment integrals are well-defined in the Riemann sense. With a series of self-explanatory steps we have
\begin{equation*}
\begin{split}
f_m =~& \sum_{i=1}^N\int_{I_i} x^m f(x)~ \mathrm{d}x\\
=~& \sum_{i=1}^N \int_{I_i} x^m \big[\alpha_i f\big(L^{-1}(x)\big)+q_i \big(L_i^{-1}(x)\big) \big]~ \mathrm{d}x\\
=~& \sum_{i=1}^N  a_i  \alpha_i\int_{I} (a_i \tilde{x}+ b_i)^m  f(\tilde{x})~ \mathrm{d}\tilde{x}+ \int_{I}x^m Q(x)~\mathrm{d}x\\
=~& \sum_{i=1}^N \sum_{k=0}^m \alpha_i a_i^{k+1}b_i^{m-k} {m \choose k} f_k + Q_m,
\end{split}
\end{equation*}
which may be recast as
$$f_m = \frac{\sum_{k=0}^{m-1}  {m \choose k} f_k \sum_{i=1}^N \alpha_i a_i^{k+1}b_i^{m-k}+Q_m}{1-\sum_{i=1}^N \alpha_i a_i^{m+1}}.$$
Also, in particular
$$f_0= \int_{I} f(x) ~ \mathrm{d}x = \frac{\int_{I} Q(x) ~\mathrm{d}x}{1- \sum_{i=1}^N a_i \alpha_i}.$$
Since $a_i =\dfrac{x_i-x_{i-1}}{x_N-x_0}$, we get $\sum_{i=1}^N a_i =1$ and  hence on account of $|\alpha_i|<1$ it follows that $\sum_{i=1}^N a_i \alpha_i<1$.
\end{proof}
We can extend the discontinuous fractal function $f$ supported on $I$ to whole $\mathbb{R}$ by defining the
extension to zero off $I$, which will also be denoted by $f$. Now the functional equations satisfied by integral transforms of these discontinuous fractal functions can be easily obtained, which may be of interest for variety of reasons, see also \cite{Barnsley1}. The general integral transform  of a ``well behaved" function $f$ is defined as
$$\hat{f} (s) = \int_{\mathbb{R}} K(x,s) f(x) ~ \mathrm{d}x,$$
where $K(x,s)$ is a suitable function, referred to as kernel of the transformation. Using the functional equation for fractal function $f$ one obtains
\begin{equation*}
\begin{split}
\hat{f} (s) =~& \int_{I} K(x,s) f(x) ~ \mathrm{d}x\\
=~& \sum_{i=1}^N \int_{I_i} K(x,s) \big[\alpha_i f\big(L_i^{-1}(x) \big) + q_i \big(L_i^{-1}(x)\big]~\mathrm{d}x\\
=~&\sum_{i=1}^N a_i \alpha_i \int_{I} K\big(L_i(x),s\big) f(x) ~\mathrm{d}x+ \hat{Q}(s),
\end{split}
\end{equation*}
where $\hat{Q}(s)$ is the integral transform of the function $Q: I \to \mathbb{R}$ defined by $Q(x)= q_i \big(L_i^{-1}(x)\big)$ for $x \in I_i$.\\
This being said, it is tempting to examine transform of a fractal function with some special  choices of kernel functions.\\
\textbf{Case 1: Laplace transform}\\  Here $K(x,s)= e^{-sx}$ for $x >0$.  Then
\begin{equation*}
\begin{split}
\hat{f} (s) =~&\sum_{i=1}^N a_i \alpha_i \int_{I} e^{-s(a_ix+b_i)} f(x) ~ \mathrm{d}x+ \hat{Q}(s)\\
=~& \sum_{i=1}^N a_i \alpha_i e^{-s b_i} \hat{f}(a_i s) + \hat{Q}(s).
\end{split}
\end{equation*}
\textbf{Case 2: Stieltjes transform}\\
Taking $K(x,s)= \frac{1}{s-x}$ we obtain
\begin{equation*}
\begin{split}
\hat{f} (s) =~&\sum_{i=1}^N a_i \alpha_i \int_{I}\frac{1}{s-(a_ix+b_i)}f(x) ~ \mathrm{d}x+ \hat{Q}(s)\\
=~& \sum_{i=1}^N \alpha_i \hat{f} \Big(\frac{1}{a_i}(s-b_i)\Big)+\hat{Q}(s).
\end{split}
\end{equation*}
\textbf{Case 3: Fourier transform}\\
For the kernel $K(x,s)=e^{jsx}$, where $j$ is the square root of $-1$, we have

\begin{equation*}
\begin{split}
\hat{f} (s) =~&\sum_{i=1}^N a_i \alpha_i \int_{I} e^{js(a_ix+b_i)}f(x) ~ \mathrm{d}x+ \hat{Q}(s)\\
=~& \sum_{i=1}^N a_i \alpha_i e^{j s b_i} \hat{f}( a_i s)+\hat{Q}(s).
\end{split}
\end{equation*}
In case of uniformly spaced knot sequence we obtain an explicit expression as follows. \\
Denoting $\Lambda(s) =\frac{1}{N}\sum_{i=1}^N \alpha_i e^{j s b_i}$,  for equidistant knots, i.e., for  $a_i=\frac{1}{N}$  previous equation yields
\begin{equation}\label{dfftfm}
\hat{f} (s) =  \Lambda(s) \hat{f}(\frac{s}{N})+ \hat{Q}(s).
\end{equation}
Applying Equation (\ref{dfftfm}) recursively, we obtain
\begin{equation}\label{dfftfm2}
\hat{f} (s) = \Big[\prod_{i=1}^k \Lambda \big(\frac{s}{N^{i-1}}\big)\Big] \hat{f}\big(\frac{s}{N^k}\big)+ \sum_{i=0}^{k-1} \Big[\prod_{m=1}^i\Lambda \big(\frac{s}{N^{m-1}}\big)\Big] \hat{Q}\big(\frac{s}{N^i}\big),
\end{equation}
where the empty product $\prod_{m=1}^0 \Lambda \big(\frac{s}{N^{m-1}}\big)=1.$ Note that the previous equation extemporizes [Equation 3.4, \cite {Mas2}, p. 177].\\
We have
$$ |\Lambda(s)| \le \frac{1}{N}\sum_{i=1}^N |\alpha_i e^{j s b_i}| \le |\alpha|_\infty,$$
and therefore
$$ \Big|\prod_{i=1}^k \Lambda \big(\frac{s}{N^{i-1}}\big)\Big| \le |\alpha|_\infty^k \to 0 ~\text{as}~ k \to 0.$$
The previous observation in conjunction with boundedness of $\hat{f}$ asserts that the first summand in (\ref{dfftfm2}) approaches zero as $k \to \infty$. Since $$ \Big|\prod_{m=1}^i\Lambda \big(\frac{s}{N^{m-1}}\big) \hat{Q}\big(\frac{s}{N^m}\big)\Big| \le M |\alpha|_\infty ^i,$$
where $M$ is such that $|\hat{Q}(s)| \le M$ for all $s\in \R$, from (\ref{dfftfm2}) we obtain
$$ \hat{f} (s) = \sum_{i=0}^{\infty} \Big[\prod_{m=1}^i\Lambda \big(\frac{s}{N^{m-1}}\big)\Big] \hat{Q}\big(\frac{s}{N^i}\big).$$
\section{Fractal Histopolation}\label{ffhistosec5}
Suppose that a sequence of  strictly increasing knots $\{x_0, x_1, \dots, x_N\}$ and a histogram $F=\{f_1,f_2,\dots,f_N \}$, where  $f_i \in \R$ is the frequency for the class $[x_{i-1}, x_i)$, $i=1,2,\dots, N$ are given. For $i=1,2,\dots, N$, let $h_i:= x_i-x_{i-1}$ represent the step size. In fractal histopolation, we match average pixel intensities with our fractal function, in contrast to matching point value as done with interpolation. That is, we seek for  an integrable fractal function $f$  satisfying ``area" matching condition
\begin{equation} \label{eqhist1}
\int_{x_{i-1}}^{x_i} f(x) ~\mathrm{d}x =h_i f_i.
\end{equation}
Consider the IFS defined by the maps
\begin{equation} \label{eqhis2}
L_i(x) = a_i x + b_i, ~~ F_i(x,y) = \alpha_i y + q_i(x), \quad i=1,2,\dots, N,
\end{equation}
where $|\alpha_i| <1$ and $q_i : I \to \mathbb{R}$ is Lipschitz continuous map. From Theorem \ref{thm3} and Theorem \ref{thm4a} it follows that the corresponding fractal function is Riemann integrable and satisfies
$$f(x) = \alpha_i f \big(L_i^{-1}(x) \big) + q_i ^{-1}\big(L_i^{-1}(x) \big), \quad x \in I_i,~~ i=1,2,\dots, N.$$
The parameters that can be varied are scaling factors $\alpha_i$ and functions $q_i$, $i=1,2,\dots, N$. The histopolation condition prescribed in Equation (\ref{eqhist1}) necessitates
\begin{equation}\label{fh1}
\begin{split}
h_i f_i =&~ \int_{I_i} f(x) ~\mathrm{d} x\\
=&~ \int_{I_i} \Big[ \alpha_i f \big(L_i^{-1}(x)\big)+ q_i \big(L_i^{-1}(x)\big) \Big]~\mathrm{d} x\\
=&~ \alpha_i a_i \int_{I} f(x)~ \mathrm{d} x + a_i \int_{I} q_i(x)~ \mathrm{d} x\\
=&~ \alpha_i a_i \sum_{i=1}^N h_i f_i +  a_i \int_{I} q_i(x)~ \mathrm{d} x, ~i=1,2,\dots, N.
\end{split}
\end{equation}
Assume $\sum_{i=1}^N h_i f_i \neq 0$. If $q_i$ are a priori fixed maps, then in the previous equation only unknown is the scaling factor $\alpha_i$, which is obtained via
\[
\alpha_i = \frac{h_i f_i -a_i \int_{I} q_i(x)~\mathrm{d} x}{a_i \sum_{i=1}^N h_i f_i}, \quad i=1,2,\dots, N.
\]
 However, this solution may not be feasible, since we  require that $|\alpha_i|<1$ for all $i=1,2,\dots, N.$ (or a less stringent condition
 $\big[\sum_{i=1}^N a_i |\alpha_i|^p \big]^{\frac{1}{p}}$ if we work in $\mathcal{L}^1(I)$ instead of $\mathcal{B}(I)$).  Thus, in principle,  the problem demands a constrained optimization. In practice, for a quicker solution, we can fix scaling factors $\alpha_i$ a priori and treat $q_i$ as unknown functions to be determined suitably so that the corresponding bounded integrable fractal function $f$ satisfies Equation (\ref{eqhist1}).
 \begin{proposition}\label{histprop1}
 Let a sequence of  strictly increasing knots $\{x_0, x_1, \dots, x_N\}$ and a histogram $F=\{f_1,f_2,\dots,f_N \}$ be given. Consider the IFS $\{I \times \mathbb{R}; W_i:i=1,2,\dots, N\}$ defined through the maps given in Equation (\ref{eqhis2}). Assume that the scaling factors are selected at random so that $|\alpha_i|<1$ for $i=1,2,\dots, N$. The corresponding fractal function solves the histopolation problem \big(Cf. Equation (\ref{eqhist1})\big) if and only if the function $q_i$ satisfies
 \begin{equation}\label{eqhis2a}
 \int_{I} q_i(x) ~\mathrm{d} x = \frac{h_if_i- \alpha_ia_i\sum_{i=1}^N h_i f_i}{a_i},  \quad i=1,2,\dots, N.
 \end{equation}
 \end{proposition}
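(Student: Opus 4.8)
The plan is to turn the self-referential equation for $f$ into a linear relation between the pieces $\int_{I_i} f$ and the integrals $\int_I q_i$, and then to read off the stated equivalence. First I would invoke Theorem \ref{thm3} together with Theorem \ref{thm4a}: since $|\alpha_i|<1$ and each $q_i$ is Lipschitz, the IFS in (\ref{eqhis2}) determines a bounded, Riemann integrable fractal function $f$ with $f\big(L_i(x)\big)=\alpha_i f(x)+q_i(x)$, so every integral occurring in (\ref{eqhist1}) and (\ref{eqhis2a}) is well defined.

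Next I would repeat the chain of equalities in (\ref{fh1}), but \emph{without} presupposing the histopolation identity. Applying the substitution $x=L_i(\tilde x)$, $\mathrm{d}x=a_i\,\mathrm{d}\tilde x$, to the functional equation gives, for every $i=1,2,\dots,N$,
\[
\int_{I_i} f(x)\,\mathrm{d}x=\alpha_i a_i\int_I f(x)\,\mathrm{d}x+a_i\int_I q_i(x)\,\mathrm{d}x .
\]
Writing $v_i:=\int_{I_i}f$ and $V:=\int_I f=\sum_{i=1}^N v_i$, summation over $i$ together with $\sum_{i=1}^N a_i=1$ yields $V\big(1-\sum_{i=1}^N\alpha_i a_i\big)=\sum_{i=1}^N a_i\int_I q_i$. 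Since $a_i>0$ and $|\alpha_i|<1$ we have $\big|\sum_{i=1}^N\alpha_i a_i\big|\le\sum_{i=1}^N a_i|\alpha_i|<\sum_{i=1}^N a_i=1$, so $1-\sum_i\alpha_i a_i\neq0$; hence $V$, and therefore each $v_i$, is completely determined by the data $\alpha_i$ and $q_i$.

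Now for the two implications. If $f$ histopolates, then $v_i=h_if_i$ for all $i$, so $V=\sum_j h_jf_j$; substituting this into the displayed identity and dividing by $a_i>0$ gives precisely (\ref{eqhis2a}). Conversely, assume (\ref{eqhis2a}) and substitute it back into the displayed identity to obtain $v_i=h_if_i+\alpha_i a_i\big(V-\sum_j h_jf_j\big)$; summing over $i$ and setting $W:=V-\sum_j h_jf_j$ gives $W=W\sum_i\alpha_i a_i$, i.e. $W\big(1-\sum_i\alpha_i a_i\big)=0$, whence $W=0$ by the nonvanishing just established, and then $v_i=h_if_i$ for all $i$, i.e. $f$ solves (\ref{eqhist1}).

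The step that needs care, and the one I regard as the main obstacle, is the converse direction: one cannot read $v_i=h_if_i$ off (\ref{eqhis2a}) directly, because $\int_I f$ is not a priori equal to $\sum_j h_jf_j$; it is the consistency argument obtained by summing the relations and exploiting $\sum_i a_i=1$ and $|\alpha_i|<1$ that forces $\int_I f=\sum_j h_jf_j$ and thereby closes the gap. Everything else is the routine change of variables already carried out in (\ref{fh1}).
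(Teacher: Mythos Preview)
Your proof is correct and follows essentially the same route as the paper: both arguments rest on the change-of-variables identity $\int_{I_i}f=\alpha_i a_i\int_I f+a_i\int_I q_i$ from (\ref{fh1}) together with the nonvanishing of $1-\sum_i a_i\alpha_i$. The only cosmetic difference is that, for the converse, the paper quotes the closed form $\int_I f=\big(\sum_i a_i\int_I q_i\big)\big/(1-\sum_i a_i\alpha_i)$ from Theorem~\ref{thm4} and substitutes, whereas you obtain the same conclusion via the cancellation argument $W(1-\sum_i a_i\alpha_i)=0$.
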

 \begin{proof}
 Necessary condition follows at once  from Equation (\ref{fh1}). From the functional equation for $f$  one obtains (see Theorem \ref{thm4})
 $$ \int_{I} f(x)~\mathrm{d} x= \frac{\sum_{i=1}^N a_i \int_{I} q_i(x)~\mathrm{d} x}{1-\sum_{i=1}^N a_i \alpha_i}.$$
 Therefore,
 \begin{equation*}
 \begin{split}
 \int_{I_i} f(x)~\mathrm{d} x=&~ \alpha_i a_i \int_{I} f(x)~ \mathrm{d} x + a_i \int_{I} q_i(x)~ \mathrm{d} x\\=&~ \alpha_i a_i \frac{\sum_{i=1}^N a_i \int_{I} q_i(x)~\mathrm{d} x}{1-\sum_{i=1}^N a_i \alpha_i}+ a_i \int_{I} q_i(x)~ \mathrm{d} x.
 \end{split}
 \end{equation*}
 Substituting the stated condition on $q_i$ in the previous equation we can deduce that $\int_{I_i} f(x)~\mathrm{d} x=h_i f_i$, completing the proof.
 \end{proof}
In what follows, we shall outline some choices for $q_i$ satisfying condition in Proposition \ref{histprop1}. For instance, taking $q_i$ as affine maps  $q_i(x)=q_{i0} x+ q_{i1}$, $i=1,2,\dots, N$, we obtain
\begin{equation}\label{fh2}
h_i f_i = \alpha_i a_i \sum_{i=1}^N h_i f_i + a_i (x_N-x_0)\Big[ \frac{q_{i0}}{2} (x_N+x_0) + q_{i1}  \Big], \quad i=1,2,\dots, N.
\end{equation}
 Since $\alpha_i \in (-1,1)$ are chosen as parameters as in the case of affine fractal interpolation function,  the above system consists of $N$ linear equations in $2N$ unknowns $q_{i0}$ and $q_{i1}$, $i=1,2,\dots, N$. Treating $q_{i0}$ also as parameters we obtain
 \begin{equation}\label{fh2a}
q_{i1}= \frac{h_i f_i -a_i \alpha_i \sum_{i=1}^N h_i f_i}{a_i (x_N-x_0)}-\frac{q_{i0}}{2}(x_N+x_0), \quad i=1,2,\dots, N.
\end{equation}
Next suppose that we are interested to construct a continuous fractal histopolant corresponding to strictly increasing knots $\{x_0, x_1, \dots, x_N\}$ and a histogram $F=\{f_1,f_2,\dots,f_N \}$. Choose $y_0$ and $y_N$ arbitrary. Recall from Section \ref{ffhistosec1} that the fractal function  corresponding to Equation (\ref{eqhis2}) is continuous if map $q_i$, $i=1,2,\dots, N$ satisfies
\begin{equation}\label{hieq2}\left\{
\begin{split}
q_1(x_0) =&~ y_0 (1-\alpha_1)\\
q_N(x_N)=&~ y_N (1-\alpha_N)\\
\alpha_{i+1} y_0+ q_{i+1} (x_0) =&~ \alpha_i y_N + q_i (x_N), \quad i=1,2,3,\dots, N-1.
\end{split}\right.
\end{equation}
Then the next proposition follows at once.
\begin{proposition}\label{histprop2}
 Let a sequence of  strictly increasing knots $\{x_0, x_1, \dots, x_N\}$ and a histogram $F=\{f_1,f_2,\dots,f_N \}$ be given. Consider the IFS $\{I \times \mathbb{R}; W_i:i=1,2,\dots, N\}$ defined through the maps given in Equation (\ref{eqhis2}). Assume that the scaling factors are selected at random so that $|\alpha_i|<1$ for $i=1,2,\dots, N$. The corresponding fractal function is continuous and solves the histopolation problem \big(Cf. Equation (\ref{eqhist1})\big) if  the function $q_i$ satisfies system of equations governed by  Equation(\ref{eqhis2a}) and Equation(\ref{hieq2}).
 \end{proposition}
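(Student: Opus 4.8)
The plan is to deduce the proposition by splicing together Proposition \ref{histprop1} with the continuity mechanism already exploited in the proof of Theorem \ref{thm1}; since only sufficiency is asserted, no fresh construction is needed and the whole matter reduces to tracking one and the same fixed point through two function spaces. To set the stage, note that each $q_i$ is Lipschitz, hence bounded, so Theorem \ref{thm3} delivers the bounded fractal function $f$ as the unique fixed point in $\mathcal{B}(I)$ of the RB operator $(Tg)(x)=\alpha_i\, g\big(L_i^{-1}(x)\big)+q_i\big(L_i^{-1}(x)\big)$ for $x\in I_i$, and Theorem \ref{thm4a} guarantees that $f$ is Riemann integrable, so the integrals in \eqref{eqhist1} are meaningful.

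The first step is continuity. Fix the arbitrary values $y_0,y_N$ and consider the closed subspace $\mathcal{C}_{y_0,y_N}(I)\subset\mathcal{B}(I)$. The relations \eqref{hieq2} are exactly the hypotheses used in the computation of Theorem \ref{thm1}: they force $(Tg)(x_i^-)=(Tg)(x_i^+)$ at every interior knot and $(Tg)(x_0)=y_0$, $(Tg)(x_N)=y_N$, so $T$ carries $\mathcal{C}_{y_0,y_N}(I)$ into itself. Since $T$ is a contraction on all of $\mathcal{B}(I)$ with factor $|\alpha|_\infty$, and $\mathcal{C}_{y_0,y_N}(I)$ is a nonempty closed $T$-invariant subset, the unique fixed point $f$ of $T$ in $\mathcal{B}(I)$ must lie in $\mathcal{C}_{y_0,y_N}(I)$; equivalently, iterating $T$ from a continuous seed produces continuous pre-fractal functions converging uniformly to $f$. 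Hence $f$ is continuous. That Theorem \ref{thm3} took the pieces $I_i$ half-open whereas Theorem \ref{thm1} took them closed is immaterial once continuity at the shared knots is known, because the two prescriptions then agree there.

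It remains to invoke Proposition \ref{histprop1}: because $q_i$ also satisfies \eqref{eqhis2a}, and because the proof of that proposition uses only the boundedness and Riemann integrability of $f$ together with the moment identity of Theorem \ref{thm4} (all of which are in force here), we obtain $\int_{I_i} f(x)\,\mathrm{d}x = h_i f_i$ for $i=1,2,\dots,N$, i.e. the histopolation condition \eqref{eqhist1}. Combined with the continuity established above, this is precisely the assertion of the proposition. I anticipate no genuine obstacle; the only delicate point is the identification of the continuous fixed point of Theorem \ref{thm1} with the bounded fixed point of Theorem \ref{thm3}, and that is settled by the one-line uniqueness argument given in the previous paragraph.
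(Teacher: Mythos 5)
Your proof is correct and follows the same route the paper intends: the paper gives no explicit argument beyond ``the next proposition follows at once,'' meaning precisely the combination you carry out, namely the join-up conditions \eqref{hieq2} (which, exactly as in Theorem \ref{thm1}, make the RB operator preserve $\mathcal{C}_{y_0,y_N}(I)$ and hence force the fixed point to be continuous) together with the sufficiency half of Proposition \ref{histprop1} applied to \eqref{eqhis2a}. Your extra care in identifying the fixed point in $\mathcal{B}(I)$ with the one in $\mathcal{C}_{y_0,y_N}(I)$, and in noting that the half-open versus closed subinterval conventions become immaterial once continuity at the knots is established, is a sound elaboration of what the paper leaves implicit.
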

It naturally raised the question of solvability of
the system mentioned in the foregoing Proposition. To this end, we make some remarks. If we take $q_i(x)= q_{i0} x+ q_{i1}$ and $\alpha_i \in (-1,1)$ as adjustable parameters, then Proposition \ref{histprop2} provides a system of $2N+1$ linear equations with $2N+2$ unknowns $y_0$, $y_N$, $q_{i0}$, and $q_{i1}$, $i=1,2,\dots, N$, choosing
one of the unknowns, say $y_0$, arbitrarily, the resulting square system of linear equations may be solved.
Alternatively, one may proceed as follows. Assume values to $q_1(x_N), q_2(x_N), \dots, q_{N-1}(x_N)$, $y_0$, and $y_N$ so that Equation (\ref{hieq2}) specifies $q_i(x_0)$ and $q_i(x_N)$ for $i=1,2,\dots, N$. Let  $q_i (x_0)= \beta_i$, and $q_i(x_N)= \gamma_i$. Equation \ref{eqhis2a}
is equivalent to
$$\int_{I} q_i(x)~ \mathrm{d} x= (x_N-x_0) \Big[\frac{h_i f_i -\alpha_i a_i \sum_{i=1}^N h_i f_i}{a_i (x_N-x_0)}\Big],~~i=1,2,\dots, N.$$
Therefore, in this case, the problem reduces to that of solving $N$ histopolation problems with boundary conditions
\begin{equation}\left\{
\begin{split}
& \int_{I} q_i(x)~ \mathrm{d} x=(x_N-x_0) \Big[\frac{h_i f_i -\alpha_i a_i \sum_{i=1}^N h_i f_i}{a_i (x_N-x_0)}\Big]\\& q_i(x_0) =\beta_i, ~ q_i(x_N)= \gamma_i, \quad i=1,2,\dots, N,
 \end{split}\right.
 \end{equation}
for which one can employ methods of histopolation by traditional nonrecursive functions, see, for instance, \cite{FOT}. Note that the corresponding fractal histopolant $f$ is continuous, but non-differentiable in general. For a special choice of $q_i$ used in the definition of $\alpha$-fractal function (see Remark \ref{remalpaff}), construction of continuous fractal histopolant $f$ seems to be rather easy. To this end, let $g \in \mathcal{C}(I)$ and $b \in \mathcal{C}(I)$ be such that $b \not \equiv g$, $b (x_0) = g(x_0)$ and $b(x_N)=g(x_N)$. Consider the IFS $\{I \times \R; W_i(x,y)=(L_i(x), F_i(x,y)),i=1,2,\dots, N\}$, where
\[
F_i(x,y)= \alpha_i y + g \circ L_i(x)-\alpha_i b(x).
\]
Corresponding fractal function $f \in \mathcal{C}(I)$ satisfies
\[
f(x) = g(x) + \alpha_i (f-b)\big(L^{-1}(x)\big).
\]
Histopolation condition in Equation (\ref{eqhis2a}) reads as
\[
\int_{I_i} g(x) ~ \mathrm{d} x- \alpha_i \int_{I} b(x) ~ \mathrm{d} x = a_i^{-1}\Big[h_i f_i - a_i \alpha_i \sum_{i=1}^N h_if_i\Big].
\]
As mentioned earlier, one can solve $N+1$ histopolation problems $\int_{I_i} g(x) = \frac{h_if_i}{a_i}$ and $\int_{I} b(x)=\sum_{i=1}^N h_if_i$ with boundary conditions $b(x_0)=g(x_0)$ and $b(x_N)=g(x_N)$.
It is worthwhile to mention that a fractal histospline $f$ of continuity $\mathcal{C}^k$ for the knot sequence $\{x_0, x_1, \dots, x_N\}$ can be obtained by differentiating a $\mathcal{C}^{k+1}$-continuous fractal spline interpolating the data $\{(x_i,y_i):i=0,1,\dots,N\}$, where, for instance,  $y_0=0$ and $y_i=y_{i-1} + h_if_i$ for $i=1,2,\dots, N$. Fractal splines interpolating a prescribed data  have received much attention in the literature, see for example \cite{BH,CV,NS1}.\\
\begin{example}
Consider the knot sequence $\{0, 1/2, 1\}$ and a histogram $F=\{2,3\}$. We construct  area true approximants of the histogram $F$ by using integrable fractal functions. Consider the IFS defined by the maps $$L_1(x)=\frac{1}{2}x,~~ L_2(x)=\frac{1}{2}x+\frac{1}{2}, \quad F_1(x,y)=\frac{1}{2}y+x+\frac{1}{4},~~F_2(x,y)=\frac{1}{2}y+2 x+ \frac{3}{4}.$$ Here the coefficients $q_{10}$ and $q_{20}$ appearing in the affinities are taken (at random) as $1$ and $2$ respectively, and the other coefficients are calculated using Equation (\ref{fh2a}). Resulting discontinuous fractal function given by
\[
f(x)=
\begin{cases}
\quad \frac{1}{2} f(2x) + 2x+ \frac{1}{4}        \quad\qquad\text{if}\quad
x\in [0,\frac{1}{2}),\\
\\
\frac{1}{2} f(2x-1)+4x-\frac{5}{4}\quad\text{if}\quad x \in [\frac{1}{2},1],
\end{cases}
\]
satisfies histopolation conditions $\int_{0}^{\frac{1}{2}} f(x) ~ \mathrm{d} x=1$ and $\int_{\frac{1}{2}}^1 f(x) ~ \mathrm{d} x=\frac{3}{2}$, see Figure \ref{Fig5}. Assume that the problem demands a continuous fractal histopolant. Bearing  Proposition \ref{histprop2} in mind, taking scale factors $\alpha_1=\alpha_2=0.5$ and assuming the value $y_0$ of the histopolant at the end point $x_0=0$ to be $0$, we solve the linear system to obtain $q_{10}=\frac{3}{2}$, $ q_{11}=0$, $q_{20}=-\frac{3}{2}$ and $q_{21}=\frac{5}{2}$. Corresponding continuous fractal histopolant satisfying the functional equation
\[
f(x)=
\begin{cases}
\quad \frac{1}{2} f(2x) + 3x       \quad\qquad\text{if}\quad
x\in [0,\frac{1}{2}),\\
\\
\frac{1}{2} f(2x-1)-3x +4\quad\text{if}\quad x \in [\frac{1}{2},1],
\end{cases}
\]
is depicted in Figure \ref{Fig6}. The $\mathcal{C}^1$-continuous histospline in Figure \ref{Fig7} is obtained by differentiating a cubic spline fractal function $g$ interpolating the data set $\{(0,0), (\frac{1}{2},1), (1,\frac{5}{2})\}$. For details on cubic spline FIF, the reader may consult \cite{BH,CV}. Corresponding smooth  histopolant satisfies the self-referential equation

\[
f(x)=
\begin{cases}
\quad 0.4 f(2x) -1.4616x^2 +0.4x+2.0436    \quad\qquad\text{if}\quad
x\in [0,\frac{1}{2}),\\
\\
0.4 f(2x-1)-0.3384 x^2 +1.676x +0.9404\quad\text{if}\quad x \in [\frac{1}{2},1].
\end{cases}
\]

 \begin{figure}[h!]
\begin{center}
\begin{minipage}{0.9\textwidth}
\epsfig{file=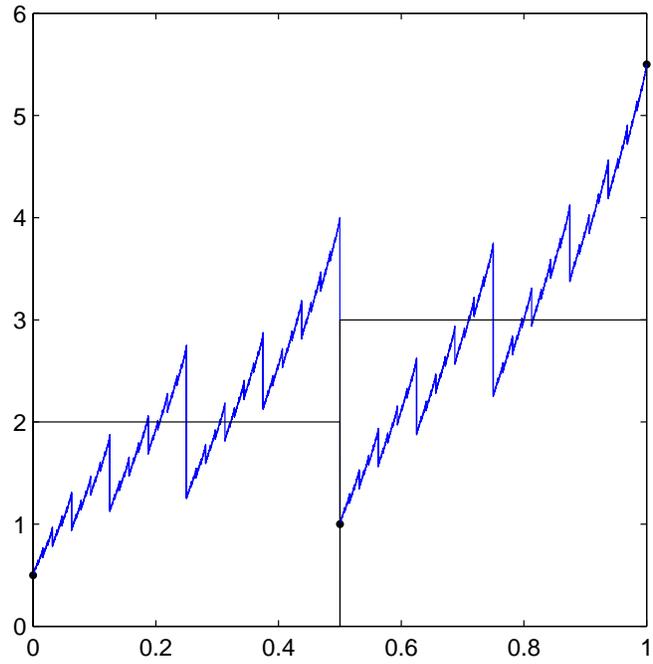,scale=0.9}
\end{minipage}\hfill
\caption{Histogram and discontinuous affine fractal histopolant.}\label{Fig5}
\end{center}
\end{figure}

\begin{figure}[h!]
\begin{center}
\begin{minipage}{0.9\textwidth}
\epsfig{file=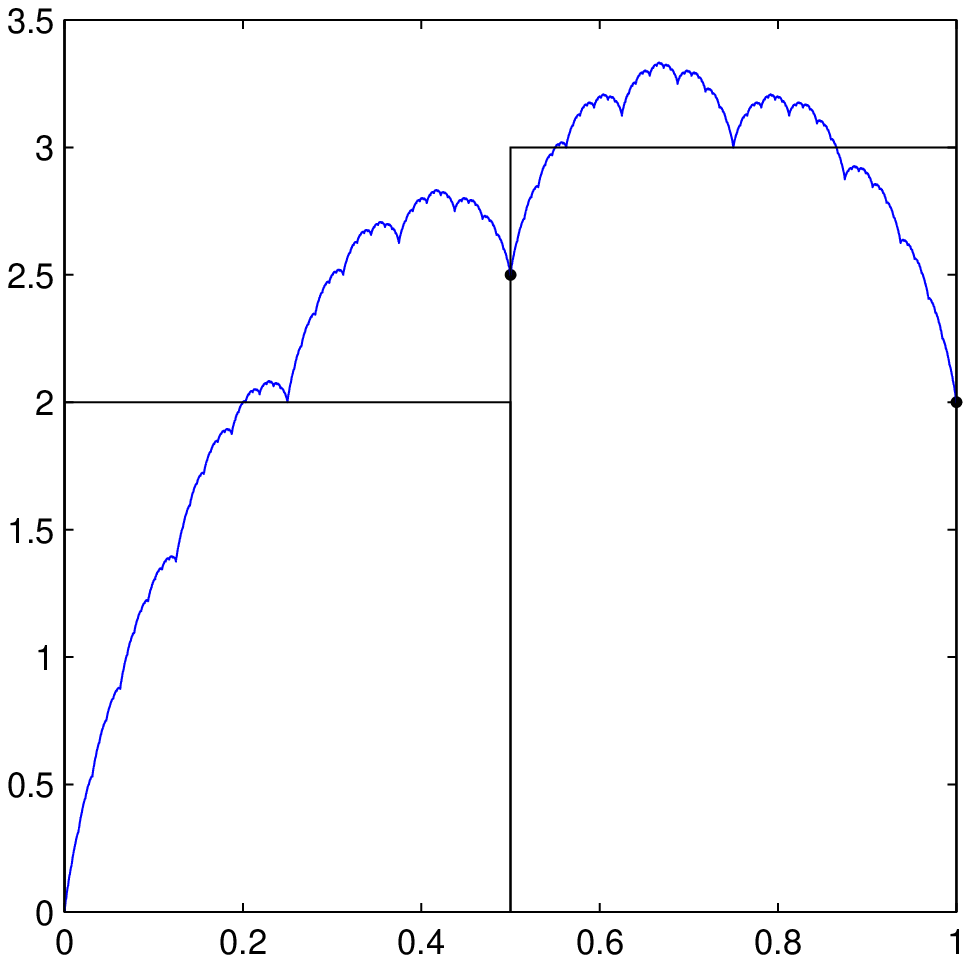,scale=0.9}
\end{minipage}\hfill
\caption{Histogram and continuous affine fractal histopolant.}\label{Fig6}
\end{center}
\end{figure}

\begin{figure}[h!]
\begin{center}
\begin{minipage}{0.9\textwidth}
\epsfig{file=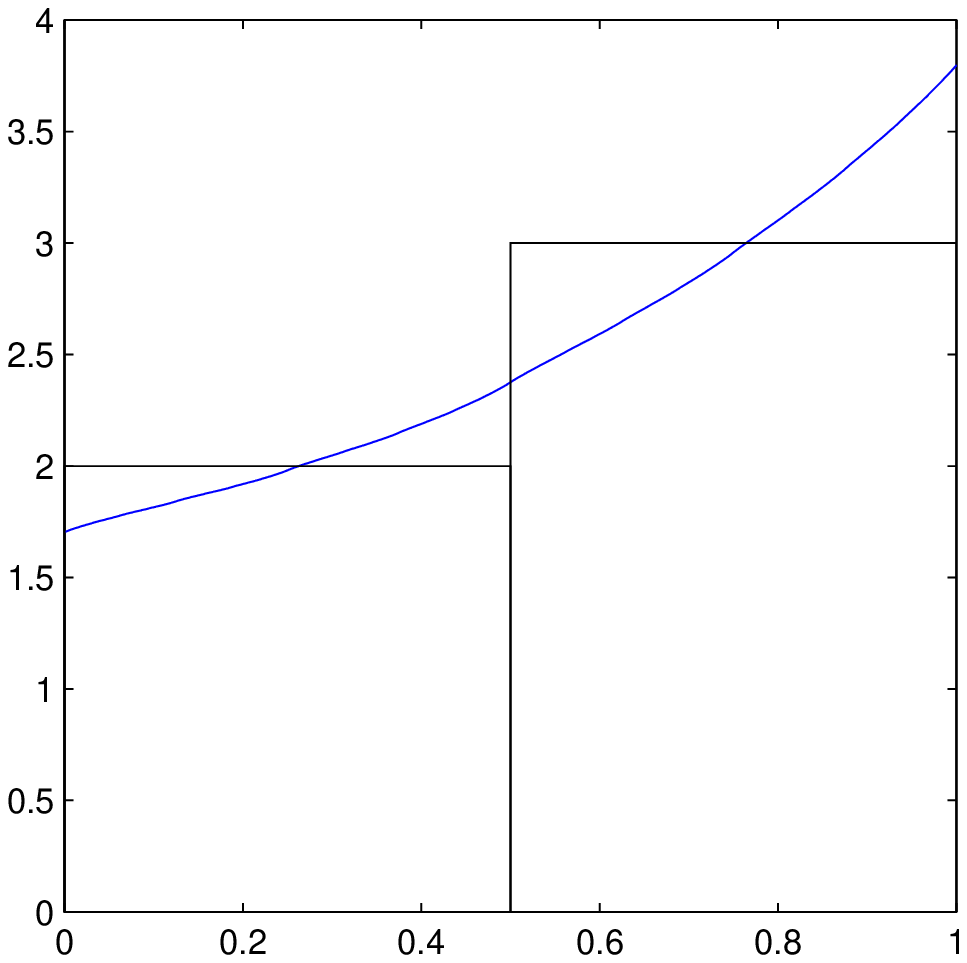,scale=0.9}
\end{minipage}\hfill
\caption{Histogram and $\mathcal{C}^1$-continuous histospline.}\label{Fig7}
\end{center}
\end{figure}
\end{example}
Fractal histopolants may be used to model planar data with prescribed Minkowski dimension which controls  the selection of scaling factors.
Minkowski and Hausdorff dimensions of a more general fractal function, for instance, bounded discontinuous fractal function, continue to remain as an open problem. There are many other strategies for identification  of free parameters in  fractal histopolation, and quite often the  particular nature of the modeling problem states the type of optimization to be employed. These also deserve future investigation.
\newpage

\end{document}